\newtheorem{theorem}{Theorem}[section]
\newtheorem{corollary}[theorem]{Corollary}
\newtheorem{remark}[theorem]{Remark}
\newtheorem{problem}[theorem]{Problem}
\newtheorem{lemma}[theorem]{Lemma}
\newtheorem{proposition}[theorem]{Proposition}
\numberwithin{equation}{section}
\def\r{\mathbb{R}}
\def\rn{\mathbb{R}^N}
\def\hn{\mathbb{H}^N}
\def\n{\mathbb{N}}
\def\eps{\varepsilon}
\def\vr{\varrho}
\def\rh{\rightharpoonup}
\def\io{\int_{\Omega}}
\def\wt{\widetilde}
\def\wh{\widehat}
\def\ol{\overline}
\def\cC{\mathcal{C}}
\def\cH{\mathcal{H}}
\def\cJ{\mathcal{J}}
\def\cN{\mathcal{N}}
\def\cO{\mathcal{O}}
\def\cS{\mathscr{S}}
\def\cT{\mathcal{T}}
\def\cU{\mathcal{U}}
\def\cV{\mathcal{V}}
\author{Mónica Clapp\footnote{M. Clapp was partially supported by CONACYT grant A1-S-10457 (Mexico).}\; and Andrzej Szulkin}
\title{Non-variational weakly coupled elliptic systems}
\date{}
\begin{document}

\maketitle

\begin{abstract}
We establish the existence of a nonnegative fully nontrivial solution to a non-variational weakly coupled competitive elliptic system. We show that this kind of solutions belong to a topological manifold of Nehari-type, and apply a degree-theoretical argument on this manifold to derive existence.

\medskip

\noindent\textsc{Keywords:} Weakly coupled elliptic system, positive solution, uniform bound, Nehari manifold, Brouwer degree, synchronized solutions.

\medskip

\noindent\textsc{MSC2010: 35J57, 35J61, 35B09, 47H11}
\end{abstract}

\section{Introduction and statement of results} \label{sec:intro}

In this paper we consider the existence of solutions to the elliptic system
\begin{equation} \label{eq:system}
\begin{cases}
-\Delta u_i = \mu_i u_i^p + \sum\limits_{j\neq i}\lambda_{ij}u_i^{\alpha_{ij}}u_j^{\beta_{ij}}, \\
u_i\ge 0,\ u_i\not\equiv 0 \text{ in } \Omega, \\
u_i\in H^1_0(\Omega), \quad i,j=1,\ldots,\ell,
\end{cases}
\end{equation}
where $\Omega$ is a smooth bounded domain in $\rn$, $N\ge 2$, $1<p<\frac{N+2}{N-2}$ if $N\geq 3$, $1<p<\infty$ if $N=2$, $\mu_i>0$, $\lambda_{ij}<0$, $\alpha_{ij},\beta_{ij}>0$ and $\alpha_{ij}+\beta_{ij}<p$ for $i,j=1,\ldots,\ell$, $j\ne i$. This system arises as a model for the steady state distribution of $l$ competing species coexisting in $\Omega$. Here $u_i$ represents the density of the $i$-th population, $\mu_i$ corresponds to the attraction between the species of the same kind, or more generally, $\mu_iu_i^p$ can be replaced by $f_i(u_i)$ and represent internal forces. The parameters $\lambda_{ij}$, $\lambda_{ji}$ (which may not be equal) correspond to the interaction (repulsion) between different species. In particular, if $\alpha_{ij}=\beta_{ij}=1$, then the interaction is of the Lotka-Volterra type while $\alpha_{ij}=1$, $\beta_{ij}=2$ corresponds to the interaction which appears in the Bose-Einstein condensates. In the latter case one also has $\lambda_{ij}=\lambda_{ji}$ and the system is variational. 

In what follows we do not assume $\lambda_{ij}=\lambda_{ji}$ or $\beta_{ij}=\alpha_{ji}$. The system \eqref{eq:system} is non-variational except for some very special choices of $\lambda_{ij}$, $\alpha_{ij}$ and $\beta_{ij}$.  While there is an extensive literature concerning the existence (and multiplicity) of solutions for variational systems like \eqref{eq:system}, there are not so many results in the non-variational case. Here we could mention \cite{cpq, cd, ctv, dd, dd2} where, however, the right-hand sides are quite different from ours. In particular, in \cite{cd, ctv, dd, dd2} the interaction term is of the Lotka-Volterra type (or is a variant of it) while the terms $f_i(u_i)$ are different from $\mu_iu_i^p$. For these $f_i$ one obtains uniform bounds on the solutions when $\lambda_{ij}\to-\infty$. Existence of such bounds allows to study the limiting behaviour of solutions. To be more precise, if $\lambda_{ij,n}\to-\infty$ and $(u_{1,n},\ldots,u_{l,n})$ is a corresponding solution with uniform bound on each component, then one expects that $u_{i,n}\to u_i$ (in an appropriate space) and $u_i(x)\cdot u_j(x)=0$ a.e.\ in $\Omega$ for all $i\ne j$, i.e. different components separate spatially. This has been   studied in the above mentioned papers. In \cite{cpq, ctv} the emphasis is in fact on the properties of limiting configurations, including regularity of free boundaries between the components. 

The main result of this paper is the following

\begin{theorem} \label{mainthm}
The system \eqref{eq:system} has a solution.
\end{theorem}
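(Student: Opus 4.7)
The plan is to set up a Nehari-type constraint for system \eqref{eq:system}, show that the set of nonnegative tuples satisfying this constraint is a topological manifold $\cN$ parameterized by a ``positive sphere'' in $(H^1_0(\Omega))^\ell$, and then to find a solution as a zero of the natural residual operator on $\cN$ via a degree argument.

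\textbf{Building $\cN$.} For $u=(u_1,\ldots,u_\ell)$ with each $u_i\in H^1_0(\Omega)$ nonnegative and not identically zero, define
$$\Psi_i(u):=\int_\Omega|\nabla u_i|^2-\mu_i\int_\Omega u_i^{p+1}-\sum_{j\ne i}\lambda_{ij}\int_\Omega u_i^{\alpha_{ij}+1}u_j^{\beta_{ij}},$$
obtained by pairing the $i$-th equation with $u_i$, and set $\cN:=\{u:\Psi_i(u)=0\text{ for every }i\}$. Every nonnegative solution of \eqref{eq:system} lies in $\cN$. Writing $u_i=t_i\wt u_i$ with $\|\wt u_i\|=1$, the conditions $\Psi_i(u)=0$ become the coupled algebraic system
$$1=\mu_i t_i^{p-1}\int_\Omega\wt u_i^{p+1}-\sum_{j\ne i}|\lambda_{ij}|t_i^{\alpha_{ij}-1}t_j^{\beta_{ij}}\int_\Omega\wt u_i^{\alpha_{ij}+1}\wt u_j^{\beta_{ij}},\qquad i=1,\ldots,\ell,$$
in $t\in(0,\infty)^\ell$. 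The sign $\lambda_{ij}<0$ together with the subcriticality $\alpha_{ij}+\beta_{ij}<p$ makes the right-hand side strictly increasing to $+\infty$ in each $t_i$ (other $t_j$ fixed) and of lower order in the cross-variables; a monotonicity and Miranda-type topological argument in the positive orthant then produces a unique, continuously varying solution $t=t(\wt u)$. The map $\wt u\mapsto(t_1(\wt u)\wt u_1,\ldots,t_\ell(\wt u)\wt u_\ell)$ is thus a homeomorphism from the positive sphere $\cS^+:=\{\wt u:\wt u_i\ge0,\,\|\wt u_i\|=1\}$ onto $\cN$, giving $\cN$ its topological manifold structure.

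\textbf{The degree argument.} Let $F:(H^1_0(\Omega))^\ell\to(H^1_0(\Omega))^\ell$ be the compact operator whose $i$-th component is $F_i(u)=(-\Delta)^{-1}\bigl[\mu_i u_i^p+\sum_{j\ne i}\lambda_{ij}u_i^{\alpha_{ij}}u_j^{\beta_{ij}}\bigr]$, suitably modified so that $F$ preserves the cone of nonnegative tuples; nonnegative zeros of $u-F(u)$ lying in $\cN$ are exactly the solutions of \eqref{eq:system}. Using the parameterization $\cS^+\simeq\cN$, I would transport $u-F(u)$ to a compact vector field on $\cS^+$ and compute its Leray--Schauder degree via the homotopy $\lambda_{ij}\mapsto s\lambda_{ij}$, $s\in[0,1]$. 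At $s=0$ the system decouples into $\ell$ independent subcritical Lane--Emden problems $-\Delta u_i=\mu_i u_i^p$, each with a positive solution whose Leray--Schauder index on the positive cone is known to be nonzero; homotopy invariance then delivers a zero of $u-F(u)$ in $\cN$ at $s=1$.

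\textbf{Main obstacle.} The key technical step is a uniform a priori bound along the homotopy. The lower bound on $\|u_i\|$ is cheap: dropping the nonpositive interaction in $\Psi_i(u)=0$ gives $\|u_i\|^2\le\mu_i\|u_i\|_{p+1}^{p+1}\le C\|u_i\|^{p+1}$, so $\|u_i\|\ge c_0>0$ uniformly. The upper bound is the real obstacle, because the system is non-variational so one cannot use an energy. I would use a blow-up/Gidas--Spruck argument component by component: at a hypothetical blow-up point in $u_i$, the nonpositive interaction term $\lambda_{ij}u_i^{\alpha_{ij}}u_j^{\beta_{ij}}$ has total homogeneity $\alpha_{ij}+\beta_{ij}<p$ and therefore vanishes after rescaling relative to the dominant $\mu_i u_i^p$, reducing the limit to a classical Liouville-type problem on $\r^N$ or a half-space, which has no nontrivial nonnegative solution by the subcriticality $p<\frac{N+2}{N-2}$. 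Combined, these bounds confine the homotopy to a fixed bounded region of $\cN$ on which the degree is well-defined and invariant, completing the existence proof.
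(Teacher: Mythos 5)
Your overall skeleton coincides with the paper's (a Nehari-type constraint parameterized over a product of spheres to rule out semitrivial solutions, the decoupling homotopy in the coupling parameters, uniform a priori bounds by blow-up exploiting $\alpha_{ij}+\beta_{ij}<p$, and the cheap lower bound on $\|u_i\|$), but the decisive step --- the degree computation at the decoupled end --- is not justified, and the way you set it up would fail. First, since $\lambda_{ij}<0$, the operator $F$ cannot be ``modified so that it preserves the cone of nonnegative tuples'' without losing equivalence with \eqref{eq:system}: the right-hand side $\mu_iu_i^p+\sum_{j\ne i}\lambda_{ij}u_i^{\alpha_{ij}}u_j^{\beta_{ij}}$ is negative wherever the interaction dominates, so its image under $(-\Delta)^{-1}$ need not be nonnegative, and truncating the whole right-hand side changes the problem. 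The paper's substitute is the positive-part system \eqref{eq:system2}: the associated operator $K$ is \emph{not} cone-preserving, but every solution is automatically nonnegative (test the $i$-th equation with $u_i^-$, Proposition \ref{mainprop}$(v)$), so cone fixed-point-index theory is never used --- and therefore the fact you invoke (``the Leray--Schauder index of a positive solution of $-\Delta u=\mu_iu^p$ on the positive cone is nonzero'') is unavailable in your framework; as stated it is not even a correct citation, since positive solutions in a general bounded domain need not be isolated and carry no individual index (what is known concerns the total index/degree over an annular region). Second, and more importantly, even granting such a fact in $\cH$, what you must compute is the degree of the \emph{transported, tangential} field on your parameterizing sphere, and you offer no bridge between the two. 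This is exactly where the paper invests its effort: at $t=0$ the transported field is, up to the factor $s^0_u$, the gradient of $\Psi(u)=\cJ(s^0_uu)$; the sublevel set $\Psi^c\cap E_k$ is shown to be contractible; and the degree is then identified with the Euler characteristic $1$ via \cite[Theorem II.3.3]{ch} --- all carried out on Galerkin subspaces $E_k$ after flattening the spheres by stereographic projection, because the transported field is not of the form identity minus compact, with the true solution recovered as $k\to\infty$ using the compactness of $K$. None of this is present in, or follows from, your sketch.

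Two secondary soft spots. Your ``positive sphere'' $\cS^+$ is not open in $\cT=\cS^\ell$ (it has a boundary), so a degree for a vector field on it is not defined as stated; the paper instead works on the open set $\cU=\{u\in\cT: u_i^+\ne 0 \text{ for all } i\}$, which also explains why nonnegativity is obtained a posteriori rather than imposed. And the monotonicity you invoke for the scalar system determining $t(\wt u)$ is false in general: when $\alpha_{ij}>1$ the coupling term makes your right-hand side decreasing in $t_i$ near $0$, and it tends to $-\infty$ as $t_j\to\infty$, so neither uniqueness nor continuous dependence follows from a Miranda-type argument alone; this is precisely the content of Lemma \ref{rm}, where existence comes from a Brouwer-degree homotopy in a box, uniqueness from a normalization plus a largest/smallest-component comparison, and continuity from compactness. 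These parts are repairable with the paper's arguments; the computation of the degree at $s=0$ on the Nehari parameterization is the genuine missing idea.
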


Existence proofs in the above-mentioned papers do not seem to be applicable here. Our problem can be reformulated as an operator equation in the space $\cH := H^1_0(\Omega)^\ell$ and one can use degree theory  to obtain a nontrivial solution. However, this could give a semitrivial solution (i.e. $u_i=0$ for some but not all $i$). To rule out such solutions we introduce a Nehari-type manifold on which all  $u$ are fully nontrivial in the sense that no $u_i$ is identically zero,  and then we apply a degree-theoretical argument on this manifold. 

 We do not know if there always exist solutions for \eqref{eq:system} which are uniformly bounded, see Problem \ref{pb}. Moreover, as we shall see in Section \ref{sec:sync}, under a suitable choice of exponents and parameters and for $\ell=2$ there exists a sequence of solutions which are synchronized in the sense that $u_{i,n}=t_{i,n}v_n$ ($i=1,2$) and such that $\|u_{i,n}\|\to\infty$ as $\lambda_{12,n},\lambda_{21,n}\to-\infty$. So the components neither separate spatially nor are bounded.

\medskip

Let $u_i^+ := \max\{u_i,0\}$, $u_i^- := \min\{u_i,0\}$, and consider the system
\begin{equation} \label{eq:system2}
\begin{cases}
-\Delta u_i = \mu_i (u_i^+)^p + \sum\limits_{j\neq i}\lambda_{ij}(u_i^+)^{\alpha_{ij}}(u_j^+)^{\beta_{ij}}, \\
u_i\in H^1_0(\Omega), \quad i,j=1,\ldots,\ell.
\end{cases}
\end{equation}
In Proposition \ref{mainprop}(v) we shall show that any fully nontrivial solution to this system also solves \eqref{eq:system}.

 In what follows we shall work with \eqref{eq:system2} and we shall also need the parametrized system
\begin{equation} \label{eq:system3}
\begin{cases}
-\Delta u_i = \mu_i (u_i^+)^p + t\sum\limits_{j\neq i}\lambda_{ij}(u_i^+)^{\alpha_{ij}}(u_j^+)^{\beta_{ij}}, \\
u_i\in H^1_0(\Omega), \quad i,j=1,\ldots,\ell, \quad 0\le t\le 1.
\end{cases}
\end{equation}
Note that \eqref{eq:system3} homotopies \eqref{eq:system2} to an uncoupled system. Since
\[
t\sum\limits_{j\neq i}|\lambda_{ij}|(u_i^+)^{\alpha_{ij}}(u_j^+)^{\beta_{ij}} \le C(1+(u_1^+)^q+\cdots + (u_\ell^+)^q),
\]
where $\alpha_{ij}+\beta_{ij}\le q<p$ for all $i,j$, the following statement holds true.

\begin{lemma} \label{Linfty}
All solutions $u =(u_1,\ldots,u_\ell)$ of \eqref{eq:system3} are uniformly bounded in $L^\infty(\Omega)$ and hence in $H^1_0(\Omega)$. This bound is independent of $t\in[0,1]$.
\end{lemma}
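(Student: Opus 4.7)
My plan is to use a Gidas--Spruck type blow-up argument. As a preliminary reduction, I first observe that every solution of \eqref{eq:system3} is automatically nonnegative: testing the $i$-th equation with $u_i^- \in H^1_0(\Omega)$ annihilates every term on the right-hand side, because each of them contains a factor $u_i^+$ and $u_i^+ u_i^- = 0$ almost everywhere, so $\int_\Omega |\nabla u_i^-|^2 = 0$ and thus $u_i \ge 0$. From now on I may treat $u_i^+$ as $u_i$.

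For the $L^\infty$ bound I argue by contradiction. Suppose there are solutions $u^{(n)}=(u_1^{(n)},\ldots,u_\ell^{(n)})$ of \eqref{eq:system3} with parameters $t_n\in[0,1]$ and $M_n := \max_i\|u_i^{(n)}\|_{L^\infty(\Omega)} \to \infty$. Passing to a subsequence, I fix the index $i_0$ achieving the maximum at some $x_n\in\Omega$, set $\mu_n := M_n^{(1-p)/2}\to 0$, and rescale
\[
v_i^{(n)}(y) := M_n^{-1}\,u_i^{(n)}(x_n+\mu_n y),\qquad y\in\Omega_n := \mu_n^{-1}(\Omega-x_n).
\]
Then $0\le v_i^{(n)}\le 1$, $v_{i_0}^{(n)}(0)=1$, and \eqref{eq:system3} becomes
\[
-\Delta v_i^{(n)} = \mu_i\,(v_i^{(n)})^p + t_n\sum_{j\ne i}\lambda_{ij}\,M_n^{\alpha_{ij}+\beta_{ij}-p}\,(v_i^{(n)})^{\alpha_{ij}}(v_j^{(n)})^{\beta_{ij}}.
\]
The strict inequality $\alpha_{ij}+\beta_{ij}<p$ makes the coupling coefficients tend to $0$. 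Standard interior $C^{1,\alpha}_{\mathrm{loc}}$ elliptic estimates (the right-hand sides being uniformly bounded in $L^\infty_{\mathrm{loc}}$) then produce, along a subsequence, a $C^1_{\mathrm{loc}}$ limit $v_i$ on the whole of $\rn$ (if $\mathrm{dist}(x_n,\partial\Omega)/\mu_n\to\infty$) or on a half-space with zero Dirichlet data (otherwise, after flattening $\partial\Omega$), satisfying $-\Delta v_i = \mu_i v_i^p$, $0\le v_i\le 1$, and $v_{i_0}(0)=1$.

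I then invoke the classical Liouville theorem of Gidas--Spruck, which in the subcritical range $1<p<(N+2)/(N-2)$ forbids nontrivial nonnegative classical solutions of $-\Delta v=\mu_i v^p$ both on $\rn$ and on half-spaces with vanishing Dirichlet data; this contradicts $v_{i_0}(0)=1$, so $M_n$ must be bounded. The $H^1_0$ bound then follows at once by testing the $i$-th equation of \eqref{eq:system3} against $u_i$:
\[
\int_\Omega|\nabla u_i|^2 = \mu_i\int_\Omega u_i^{p+1} + t\sum_{j\ne i}\lambda_{ij}\int_\Omega u_i^{\alpha_{ij}+1}u_j^{\beta_{ij}},
\]
and every term on the right is uniformly controlled because each power of $u_i,u_j$ is. The main obstacle I expect is the blow-up step: one must check carefully that the coupling term really disappears in the limit (which is where the hypothesis $\alpha_{ij}+\beta_{ij}<p$ enters decisively) and that the boundary case is correctly handled by the half-space Liouville theorem. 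The non-variational nature of the system plays no role here, since only the signs and the growth rates of the coupling terms are used.
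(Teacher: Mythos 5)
Your argument is correct and is essentially the paper's own proof (Appendix \ref{sec:appendix}): the same rescaling $\vr_n^{2/(p-1)}M_n=1$ around a maximum point of the largest component, the same observation that $\alpha_{ij}+\beta_{ij}<p$ makes the coupling coefficients $M_n^{\alpha_{ij}+\beta_{ij}-p}\to 0$ in the rescaled system, and the same interior/boundary dichotomy resolved by the Gidas--Spruck Liouville theorems on $\rn$ and on a half-space with zero Dirichlet data. The only cosmetic difference is that you invoke boundary flattening in the second case, whereas the paper proves directly (Lemma \ref{lem}) that the rescaled domains converge to the half-space $\{y_N<d\}$.
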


This has been shown, in a much more general setting, in \cite{gs} for a single equation and in \cite{dFY} for two equations. It is easy to see that the argument in \cite{dFY} extends to an arbitrary number of equations. In both papers a blow-up procedure is used in order to reduce the problem to a Liouville-type result. For the reader's convenience, in Appendix \ref{sec:appendix} we shall provide a simple proof of such reduction, adapted to our special case. The assumption $q<p$ is crucial for the validity of this lemma. Indeed, in \cite{dww} it has been shown that the conclusion may fail if $q=p$.

The paper is organized as follows. In Section \ref{sec:rm} we state and prove a lemma for functions in $\r^\ell$. In Section \ref{sec:nehari} we define a Nehari-type manifold $\cN$ similar to the one introduced in \cite{ctv2}.  We also show that solutions to \eqref{eq:system2} correspond to solutions for an operator equation in an open subset of the product of the unit spheres $\cS_i\subset H^1_0(\Omega)$, $1\le i\le \ell$. The idea comes from \cite{cs}. To our knowledge, this is the first time a Nehari-type manifold appears in a non-variational setting.  Theorem \ref{mainthm} is proved in Section \ref{sec:main} and synchronized solutions are discussed in Section \ref{sec:sync}. As we have already mentioned, Lemma \ref{Linfty} is proved in Appendix \ref{sec:appendix}.

\section{A lemma on functions in $\r^\ell$} \label{sec:rm}

Let $a_i,\alpha_{ij},\beta_{ij}>0$, $b_i,d_{ij}\ge 0$, $\alpha_{ij}+\beta_{ij}<p$ for all $i,j=1,\ldots,\ell$, $j\ne i$. Define $M:(0,\infty)^\ell\to\r^\ell$ as
\[M(s) := (M_1(s),\ldots,M_\ell(s)),
\]
where
\[
M_i(s) := a_is_i - b_is_i^p + \sum\limits_{j\neq i}d_{ij}s_i^{\alpha_{ij}}s_j^{\beta_{ij}}, \qquad i,j= 1,\ldots,\ell.
\]

\begin{lemma} \label{rm}
\begin{itemize}
\item[$(i)$] If $b_i=0$ for some $i$, then $M(s)\ne 0$ for any $s\in(0,\infty)^\ell$.
\item[$(ii)$] If $b_i>0$ for all $i$, then there exists $s\in(0,\infty)^\ell$ such that $M(s)=0$.

 Moreover, if \ $0<a\le a_i\le\ol a$, \ $0<b\le b_i\le \ol b$ \ and \ $d_{ij}\le\ol d$ \ for all $i,j$, then there exist $0<r<R$, depending only on $a,\ol a,b,\ol b,\ol d$, such that $s\in(r,R)^\ell$.
\item[$(iii)$] The solution $s$ in $(ii)$ is unique.
\item[$(iv)$] The solution $s$ in $(ii)$ depends continuously on $a_i,b_i>0,\ d_{ij}\geq 0$.
\end{itemize}
\end{lemma}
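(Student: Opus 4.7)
Part (i) is immediate by inspection: if $b_i=0$, every summand in $M_i(s)=a_is_i+\sum_{j\neq i}d_{ij}s_i^{\alpha_{ij}}s_j^{\beta_{ij}}$ is nonnegative and $a_is_i>0$ for $s\in(0,\infty)^\ell$, so $M_i(s)>0$. For the remaining parts, my plan is: prove existence in (ii) via Brouwer degree along a deformation to the decoupled problem; prove uniqueness in (iii) by a scaling/comparison argument crucially exploiting $\alpha_{ij}+\beta_{ij}<p$; and deduce continuity in (iv) from compactness and uniqueness. I expect (iii) to be the principal obstacle: the system is non-variational, so standard monotonicity and energy arguments do not apply directly, and a sub/supersolution approach looks awkward with the mixed powers $s_i^{\alpha_{ij}}s_j^{\beta_{ij}}$.

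For (ii), consider the homotopy $\Phi_t(s)_i:=b_is_i^p-a_is_i-t\sum_{j\neq i}d_{ij}s_i^{\alpha_{ij}}s_j^{\beta_{ij}}$ on $(0,\infty)^\ell$, $t\in[0,1]$, whose zeros at $t=1$ are the solutions of $M(s)=0$. If $\Phi_t(s)=0$, dropping the nonpositive coupling term yields $b_is_i^p\ge a_is_i$, hence $s_i\ge r:=(a/\ol b)^{1/(p-1)}$. For an upper bound, let $s_{i^*}=\max_i s_i$; using $s_j\le s_{i^*}$ in the $i^*$-th equation gives $b s_{i^*}^p\le\ol a s_{i^*}+(\ell-1)\ol d\,s_{i^*}^{\gamma}$ with $\gamma:=\max_{i,j}(\alpha_{ij}+\beta_{ij})<p$, which forces $s_{i^*}\le R$ for some $R=R(a,\ol a,b,\ol b,\ol d)$ since $\max(1,\gamma)<p$. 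Thus on the open box $U:=(r/2,2R)^\ell$ no zero of $\Phi_t$ lies on $\partial U$ for any $t\in[0,1]$. At $t=0$ the system decouples with unique zero $s^0_i=(a_i/b_i)^{1/(p-1)}$ and diagonal Jacobian with strictly positive entries $(p-1)a_i$, so $\deg(\Phi_0,U,0)=1$; homotopy invariance gives $\deg(\Phi_1,U,0)=1$ and hence a solution in $(r,R)^\ell$.

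For (iii), let $s,\tilde s\in(0,\infty)^\ell$ both satisfy $M=0$, set $c:=\max_i(s_i/\tilde s_i)$, attained at some $i^*$, and assume toward contradiction that $c>1$. Substituting $s_{i^*}=c\tilde s_{i^*}$ and $s_j\le c\tilde s_j$ in $M_{i^*}(s)=0$ gives
\[
b_{i^*}c^p\tilde s_{i^*}^p\le a_{i^*}c\tilde s_{i^*}+\sum_{j\neq i^*}d_{i^*j}\,c^{\alpha_{i^*j}+\beta_{i^*j}}\tilde s_{i^*}^{\alpha_{i^*j}}\tilde s_j^{\beta_{i^*j}},
\]
while multiplying $M_{i^*}(\tilde s)=0$ by $c^p$ yields
\[
b_{i^*}c^p\tilde s_{i^*}^p = a_{i^*}c^p\tilde s_{i^*}+\sum_{j\neq i^*}d_{i^*j}\,c^p\,\tilde s_{i^*}^{\alpha_{i^*j}}\tilde s_j^{\beta_{i^*j}}.
\]
Subtracting, $a_{i^*}(c^p-c)\tilde s_{i^*}\le\sum_{j\neq i^*}d_{i^*j}(c^{\alpha_{i^*j}+\beta_{i^*j}}-c^p)\tilde s_{i^*}^{\alpha_{i^*j}}\tilde s_j^{\beta_{i^*j}}$. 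The left side is strictly positive (since $c>1$, $p>1$, $a_{i^*}>0$, $\tilde s_{i^*}>0$), while the right side is nonpositive (since $\alpha_{i^*j}+\beta_{i^*j}<p$ and $c>1$): contradiction. Hence $c\le 1$; exchanging the roles of $s$ and $\tilde s$ gives $c=1$, so $s=\tilde s$.

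Finally, part (iv) follows from compactness and uniqueness. Given $(a_i^{(n)},b_i^{(n)},d_{ij}^{(n)})\to(a_i,b_i,d_{ij})$ with $a_i,b_i>0$, the coefficients eventually satisfy uniform bounds $a,\ol a,b,\ol b,\ol d$, so by (ii) the unique solutions $s^{(n)}$ all lie in the fixed compact box $[r,R]^\ell\subset(0,\infty)^\ell$. Any subsequential limit $s^*$ satisfies $M(s^*)=0$ at the limit coefficients by continuity of $M$ jointly in its arguments; by (iii) $s^*$ equals the unique solution for the limit data, so the full sequence converges.
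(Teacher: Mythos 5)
Your proof is correct and follows essentially the same route as the paper: a priori bounds confining all zeros to a box $[r,R]^\ell$ plus a Brouwer degree/homotopy argument for existence, a scaling comparison at the maximal index exploiting $\alpha_{ij}+\beta_{ij}<p$ for uniqueness, and compactness combined with uniqueness for continuity. The only differences are cosmetic: you homotopy to the decoupled map $s_i\mapsto b_is_i^p-a_is_i$ (degree $+1$ via the diagonal Jacobian) where the paper homotopies $M$ to the affine map $s\mapsto\rho-s$ (degree $(-1)^\ell$), and you run the uniqueness argument with the ratio $c=\max_i s_i/\tilde s_i$ instead of the paper's normalization of one solution to $(1,\dots,1)$, which is the same computation; the only point to tidy is that $s_{i^*}^{\alpha_{i^*j}+\beta_{i^*j}}\le s_{i^*}^{\gamma}$ requires $s_{i^*}\ge 1$, remedied trivially by $t^{\alpha_{ij}+\beta_{ij}}\le 1+t^{\gamma}$ for all $t>0$.
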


\begin{proof}
$(i):$ If $b_i=0$ then 
\[
M_i(s) = a_is_i+\sum\limits_{j\neq i}d_{ij}s_i^{\alpha_{ij}}s_j^{\beta_{ij}}>0 \qquad \text{for all } s\in(0,\infty)^\ell.
\]

$(ii):$  Let $0<r<R$ be such that, for every $i,j=1,\ldots,\ell$,
\begin{align*}
a_it-b_it^p >0 &\qquad\text{if \ }t\in(0,r], \\
a_it-b_it^p+\sum\limits_{j\neq i}d_{ij}t^{\alpha_{ij}+\beta_{ij}}<0 &\qquad\text{if \ }t\in[R,\infty)
\end{align*}
(such $R$ exists because $\alpha_{ij}+\beta_{ij}<p$).
If $s=(s_1,\ldots,s_\ell)\in(0,\infty)^\ell$ and $s_i\ge s_j$ for all $j$, then
\[
M_i(s) = a_is_i-b_is_i^p+\sum\limits_{j\neq i}d_{ij}s_i^{\alpha_{ij}}s_j^{\beta_{ij}} \le  a_is_i-b_is_i^p+\sum\limits_{j\neq i}d_{ij}s_i^{\alpha_{ij}+\beta_{ij}}.
\]
Therefore, $M_i(s)<0$ whenever $s_i=\max\{s_1,\ldots,s_\ell\}\ge R$, and $M_i(s)>0$ if $0<s_i\le r$. If \ $a\le a_i\le\ol a$, \ $b\le b_i\le \ol b$, \ $d_{ij}\le\ol d$, \ then 
\[
a_it-b_it^p\ge at-\ol bt^p,  \qquad  a_it-b_it^p+\sum\limits_{j\neq i}d_{ij}t^{\alpha_{ij}+\beta_{ij}} \le \ol at-bt^p + \sum\limits_{j\neq i}\ol dt^{\alpha_{ij}+\beta_{ij}},
\]
so $r,R$ may be chosen as claimed.

Let
\[
G(s) := \rho-s \qquad \text{where}\quad\rho :=\tfrac{r+R}2(1,\ldots,1).
\] 
Then  $H(s,\tau) := \tau M(s)+(1-\tau)G(s) \ne 0$ on the boundary of $[r,R]^\ell$ for every $\tau\in[0,1]$. Hence this is an admissible homotopy for the Brouwer degree (see e.g. \cite[Appendix D]{wi} for the definition and properties of this degree). So 
\[
\deg(M, (r,R)^\ell, \rho) = \deg(G,(r,R)^\ell, \rho) = (-1)^\ell
\]
and $M(s)=0$ must have a solution.

$(iii):$ If $M(s_1^0,\ldots,s_\ell^0)=0$, then $\wt M(1,\ldots,1)=0$ where
\[
\wt M_i(s) = \wt a_is_i-\wt b_1s_i^p+\sum\limits_{j\neq i}\wt d_{ij}s_i^{\alpha_{ij}}s_j^{\beta_{ij}}
\]
with $\wt a_i := a_is_i^0$, \ $\wt b_i := b_i(s_i^0)^p$, \ $\wt d_{ij} := d_{ij}(s_i^0)^{\alpha_{ij}}(s_j^0)^{\beta_{ij}}$.
So we may assume without loss of generality that $M(1,\ldots,1)=0$. Then, 
\[
a_i-b_i+\sum\limits_{j\neq i}d_{ij}=0.
\]
Suppose there is another solution $s=(s_1,\ldots,s_\ell)$. Then, using the previous identity, we get
\[
0=a_is_i-b_is_i^p+\sum\limits_{j\neq i}d_{ij}s_i^{\alpha_{ij}}s_j^{\beta_{ij}} = a_is_i - \Big(a_i+\sum\limits_{j\neq i}d_{ij}\Big)s_i^p+\sum\limits_{j\neq i}d_{ij}s_i^{\alpha_{ij}}s_j^{\beta_{ij}}, 
\]
and after rearranging the terms,
\[
a_i(s_i-s_i^p) = \sum\limits_{j\neq i}d_{ij}(s_i^p-s_i^{\alpha_{ij}}s_j^{\beta_{ij}}).
\]
There are two possible cases: If $s_i>1$ for some $i$, we may assume without loss of generality that $s_i\ge s_j$ for all $j$. Then the left-hand side above is negative while the right-hand side is $\ge 0$, a contradiction. If, on the other hand, $0<s_i<1$ for some $i$, we may assume $s_i\le s_j$ for all $j$. Now the left-hand side is positive and the right-hand side is $\le 0$, a contradiction again. 

$(iv):$ If $a_{n,i},a_i,b_{n,i},b_i>0$, $d_{n,i},d_i\geq 0$, $a_{n,i}\to a_i$, $b_{n,i}\to b_i$, $d_{n,ij}\to d_{ij}$ then, as in $(ii)$, there exist $0<r<R$ such that the unique solution $s_n$ to 
$$M_{n,i}(s) := a_{n,i}s_i - b_{n,i}s_i^p + \sum\limits_{j\neq i}d_{n,ij}s_i^{\alpha_{ij}}s_j^{\beta_{ij}}=0, \qquad i,j= 1,\ldots,\ell,$$
belongs to $[r,R]^\ell$ for every $n$. Passing to a subsequence, we have that $s_n\to s\in[r,R]^\ell$ and $M(s)=0$.
\end{proof}

\section{A Nehari-type manifold} \label{sec:nehari}

Let $\cH := H^1_0(\Omega)^\ell$, $u=(u_1,\ldots,u_\ell)\in\cH$. As convenient norms in $H^1_0(\Omega)$ and  $\cH$ we choose
\[
\|u_i\| :=\left(\io|\nabla u_i|^2\right)^{\frac12} \quad \text{and} \quad \|u\| := (\|u_1\|^2+\cdots+\|u_\ell\|^2)^{\frac12},
\]
and we denote by $\langle\,\cdot\,,\,\cdot\,\rangle$ the inner product in $H^1_0(\Omega)$. Let 
\begin{equation*} \label{eq:n}
I(u) := (I_1(u),\ldots,I_\ell(u))
\end{equation*}
where $I_i: H^1_0(\Omega)\to H^1_0(\Omega)$ are given by 
\begin{equation} \label{eq:ii}
I_i(u):=u_i-K_i(u)
\end{equation}
and
\begin{equation} \label{eq:ki}
\langle K_i(u),v\rangle := \io\mu_i(u_i^+)^pv + \sum\limits_{j\neq i}\lambda_{ij}\io(u_i^+)^{\alpha_{ij}}(u_j^+)^{\beta_{ij}}v\quad \forall v\in H^1_0(\Omega).
\end{equation}

\begin{lemma} \label{lem:K compact}
If $u_n\rh u$ weakly in $\cH$, then $K_i(u_n)\to K_i(u)$ strongly in $H^1_0(\Omega)$ for each $i=1,\ldots,\ell$. 
\end{lemma}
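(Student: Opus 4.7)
The plan is to argue by duality. From \eqref{eq:ii}--\eqref{eq:ki},
\[
\langle K_i(u_n) - K_i(u), v\rangle = \mu_i\io[(u_{n,i}^+)^p - (u_i^+)^p]v + \sum_{j\ne i}\lambda_{ij}\io\big[(u_{n,i}^+)^{\alpha_{ij}}(u_{n,j}^+)^{\beta_{ij}} - (u_i^+)^{\alpha_{ij}}(u_j^+)^{\beta_{ij}}\big]v,
\]
so $\|K_i(u_n) - K_i(u)\|$ is the supremum of this pairing over $v\in H^1_0(\Omega)$ with $\|v\|\le 1$. I will bound each integral by H\"older's inequality in such a way that $v$ is isolated in a Sobolev-admissible $L^r(\Omega)$-norm, and verify that the remaining factor vanishes. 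The main input is that the weak convergence $u_n\rh u$ in $\cH$ combined with the Rellich--Kondrachov compact embedding $H^1_0(\Omega)\hookrightarrow L^r(\Omega)$ (valid for $r<2^*:=\tfrac{2N}{N-2}$ when $N\ge 3$, and for any $r<\infty$ when $N=2$) yields $u_{n,k}\to u_k$ strongly in $L^r(\Omega)$ for each such $r$ and every $k$; since $t\mapsto t^+$ is $1$-Lipschitz, the same holds for $u_{n,k}^+$.

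For the pure-power term, the elementary inequality $|a^p - b^p|\le C(|a|^{p-1} + |b|^{p-1})|a-b|$ together with H\"older at exponents $(\tfrac{p+1}{p-1},\,p+1,\,p+1)$ gives
\[
\Big|\io[(u_{n,i}^+)^p - (u_i^+)^p]\,v\Big| \le C\big(\|u_{n,i}\|_{L^{p+1}}^{p-1} + \|u_i\|_{L^{p+1}}^{p-1}\big)\|u_{n,i}^+ - u_i^+\|_{L^{p+1}}\|v\|_{L^{p+1}}.
\]
Since $p+1<2^*$, the Sobolev embedding yields $\|v\|_{L^{p+1}}\le C\|v\|$; the middle factor vanishes by the strong $L^{p+1}$-convergence of $u_{n,i}^+$, while the bracketed factor is bounded by weak convergence of $u_n$.

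The cross terms are handled in the same spirit, and here the exponent bookkeeping is the main technical point. Set $\gamma_{ij} := \alpha_{ij}+\beta_{ij}$. The pointwise bound $(a^+)^{\alpha_{ij}}(b^+)^{\beta_{ij}}\le C(|a|^{\gamma_{ij}}+|b|^{\gamma_{ij}})$ shows that the Nemytskii map $(a,b)\mapsto (a^+)^{\alpha_{ij}}(b^+)^{\beta_{ij}}$ is continuous from $L^{\gamma_{ij}\sigma}(\Omega)^2$ into $L^\sigma(\Omega)$ for any $\sigma\ge 1$. Pairing the cross term with $v$ via H\"older and the Sobolev embedding $H^1_0\hookrightarrow L^{\sigma'}$ is admissible provided $\sigma'\le 2^*$, i.e.\ $\sigma\ge(2^*)'$. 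The crucial point is that both conditions are simultaneously compatible with strong convergence of the components: taking $\sigma=(2^*)'$, the required integrability exponent $\gamma_{ij}(2^*)'$ is strictly less than $2^*$ precisely because $\gamma_{ij}<p<\tfrac{N+2}{N-2}=2^*/(2^*)'$, so $u_{n,k}\to u_k$ in the needed $L^{\gamma_{ij}\sigma}$ norm. Combining these estimates with the splitting
\[
(u_{n,i}^+)^{\alpha_{ij}}(u_{n,j}^+)^{\beta_{ij}} - (u_i^+)^{\alpha_{ij}}(u_j^+)^{\beta_{ij}} = \big[(u_{n,i}^+)^{\alpha_{ij}} - (u_i^+)^{\alpha_{ij}}\big](u_{n,j}^+)^{\beta_{ij}} + (u_i^+)^{\alpha_{ij}}\big[(u_{n,j}^+)^{\beta_{ij}} - (u_j^+)^{\beta_{ij}}\big]
\]
and taking the supremum over $v$ with $\|v\|\le 1$ gives $\|K_i(u_n) - K_i(u)\|\to 0$. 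The case $N=2$ is easier, as the subcriticality restriction disappears and $\sigma$ can be chosen arbitrarily close to $1$.
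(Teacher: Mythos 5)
Your proposal is correct and follows essentially the same route as the paper: estimate $\|K_i(u_n)-K_i(u)\|$ by duality, use the compactness of the embedding $H^1_0(\Omega)\hookrightarrow L^r(\Omega)$ for subcritical $r$ together with H\"older and Sobolev inequalities, and conclude via continuity of the relevant Nemytskii operators (the paper invokes \cite[Theorem A.2]{wi} where you use elementary pointwise inequalities and a Krasnoselskii-type continuity statement). The only cosmetic differences are your choice of H\"older exponents $(2^*)'$ and $2^*$ for the coupling terms, versus the paper's exponents built from $\alpha_{ij}+\beta_{ij}+1$, and the harmless fact that when $\gamma_{ij}(2^*)'<1$ one should simply replace it by a larger subcritical exponent, which is possible since $\Omega$ is bounded; both bookkeepings work because $\alpha_{ij}+\beta_{ij}<p<\tfrac{N+2}{N-2}$.
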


\begin{proof}
Since $p,\,\alpha_{ij}+\beta_{ij}<\frac{N+2}{N-2}$ for $N\ge3$, after passing to a subsequence $u_{n,i}^+\to u_i^+$ strongly in $L^{p+1}(\Omega)$ and in $L^{\alpha_{ij}+\beta_{ij}+1}(\Omega)$ for every $j\neq i$. Using Hölder's and Sobolev's inequalities we obtain
\begin{align*}
&|\langle K_i(u_n)-K_i(u),v\rangle| \leq C\Big(\big|(u_{n,i}^+)^p-(u_i^+)^p\big|_{\frac{p+1}{p}}\\
&+ \sum_{j\neq i}\big|(u_{n,i}^+)^{\alpha_{ij}}-(u_i^+)^{\alpha_{ij}}\big|_{\frac{\alpha_{ij}+\beta_{ij}+1}{\alpha_{ij}}}+ \sum_{j\neq i}\big|(u_{n,j}^+)^{\beta_{ij}}-(u_j^+)^{\beta_{ij}}\big|_{\frac{\alpha_{ij}+\beta_{ij}+1}{\beta_{ij}}}\Big)\|v\|,
\end{align*}
where $|\,\cdot\,|_r$ denotes the norm in $L^r(\Omega)$. From \cite[Theorem A.2]{wi} we derive
$$\sup_{v\neq 0}\frac{|\langle K_i(u_n)-K_i(u),v\rangle|}{\|v\|}\longrightarrow 0.$$
Hence, $K_i(u_n)\to K_i(u)$ strongly in $H^1_0(\Omega)$, as claimed.
\end{proof}

We define a Nehari-type set $\cN$ by putting 
\[
\cN := \{u\in\cH: u_i\ne 0 \text{ \ and \ } \langle I_i(u),u_i\rangle = 0 \text{ \ for all \ } i=1,\ldots,\ell\}.
\]

\begin{lemma} \label{bddaway}
$\cN$ is closed in $\cH$. 
\end{lemma}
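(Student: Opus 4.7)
The plan is to show closedness in two pieces: the equation $\langle I_i(u),u_i\rangle=0$ passes to the limit by continuity, and the nontriviality $u_i\ne 0$ is preserved because on $\cN$ each component is \emph{uniformly bounded away from zero}.

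First I would derive the lower bound. For $u\in\cN$, expand
\[
0 = \langle I_i(u),u_i\rangle = \|u_i\|^2 - \mu_i\io(u_i^+)^{p+1} - \sum_{j\neq i}\lambda_{ij}\io(u_i^+)^{\alpha_{ij}+1}(u_j^+)^{\beta_{ij}},
\]
using that $(u_i^+)^{\alpha}u_i = (u_i^+)^{\alpha+1}$ for $\alpha>0$. Since $\lambda_{ij}<0$, the last sum is nonpositive, so $\|u_i\|^2 \le \mu_i\io(u_i^+)^{p+1} \le \mu_i|u_i|_{p+1}^{p+1} \le C\mu_i\|u_i\|^{p+1}$ by Sobolev's embedding (which applies since $p<\frac{N+2}{N-2}$). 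As $u_i\ne 0$, dividing by $\|u_i\|^2$ gives
\[
\|u_i\| \ge c_i := (C\mu_i)^{-\frac{1}{p-1}} > 0 \qquad \text{for all } u\in\cN,\ i=1,\ldots,\ell.
\]

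Now take $u_n \in \cN$ with $u_n \to u$ strongly in $\cH$. By Lemma \ref{lem:K compact}, $K_i(u_n)\to K_i(u)$ in $H^1_0(\Omega)$, so
\[
\langle I_i(u_n),u_{i,n}\rangle = \|u_{i,n}\|^2 - \langle K_i(u_n),u_{i,n}\rangle \longrightarrow \|u_i\|^2 - \langle K_i(u),u_i\rangle = \langle I_i(u),u_i\rangle.
\]
The left-hand side vanishes for each $n$, so $\langle I_i(u),u_i\rangle = 0$. Moreover, by the lower bound, $\|u_{i,n}\| \ge c_i$ for every $n$, hence in the limit $\|u_i\|\ge c_i > 0$, so $u_i \ne 0$. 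Therefore $u\in\cN$, which proves $\cN$ is closed.

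The only nonroutine step is the uniform lower bound in the first paragraph; everything else is continuity of $I$ via Lemma \ref{lem:K compact}. The lower bound is also the reason the closedness is nontrivial at all: without it, a sequence in $\cN$ could in principle converge to a semitrivial point where some component vanishes, which would violate the $u_i\ne 0$ requirement built into $\cN$.
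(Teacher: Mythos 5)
Your proof is correct and follows essentially the same route as the paper: the key step in both is the bound $\|u_i\|^2\le\mu_i\io(u_i^+)^{p+1}\le C_i\|u_i\|^{p+1}$ (using $\lambda_{ij}<0$ and Sobolev), giving a uniform lower bound $\|u_i\|\ge d_0>0$ on $\cN$, after which closedness follows from continuity of $u\mapsto\langle I_i(u),u_i\rangle$. The paper leaves that last continuity step implicit, while you spell it out via Lemma \ref{lem:K compact}; no substantive difference.
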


\begin{proof}
Since $\lambda_{ij}<0$, it follows from the Sobolev inequality that
\[
\|u_i\|^2 \le \mu_i\io (u_i^+)^{p+1} \le C_i\|u_i\|^{p+1}
\]
for some $C_i>0$. Hence there exists $d_0>0$ such that, if $(u_1,\ldots,u_\ell)\in\cN$, then $\|u_i\| \ge d_0$ for all $i$. This shows that $\cN$ is closed in $\cH$.
\end{proof}

For $u :=(u_1,\ldots,u_\ell)\in\cH$, $s:=(s_1,\ldots,s_\ell)\in(0,\infty)^\ell$ and $su := (s_1u_1,\ldots,s_\ell u_\ell)$, we define
$$M_u(s) := (M_{u,1}(s),\ldots,M_{u,\ell}(s)),$$
where
\[
M_{u,i}(s) := \langle I_i(su),u_i\rangle = a_{u,i}s_i-b_{u,i}s_i^p + \sum\limits_{j\neq i}d_{u,ij}s_i^{\alpha_{ij}}s_j^{\beta_{ij}}
\] 
and
\[
a_{u,i} := \|u_i\|^2, \quad b_{u,i} := \io\mu_i(u_i^+)^{p+1}, \quad d_{u,ij} := \io(-\lambda_{ij})(u_i^+)^{\alpha_{ij}+1}(u_j^+)^{\beta_{ij}}.
\]

\begin{lemma} \label{su}
\begin{itemize}
\item[$(i)$]If $a_{u,i}\neq 0$ and $b_{u,i}=0$ for some $i$, then $M_{u}(s)\neq 0$ for any $s\in(0,\infty)^\ell$.
\item[$(ii)$]If $a_{u,i},\,b_{u,i}>0$ for all $i$, then there exists a unique $s_u\in(0,\infty)^\ell$ such that $M_u(s_u)= 0$. 
Moreover, if \ $0<a\le a_{u,i}\le\ol a$, \ $0<b\le b_{u,i}\le \ol b$ \ and \ $d_{u,ij}\le\ol d$ for all $i,j$, then there exist $0<r<R$, depending only on $a,\ol a,b,\ol b,\ol d$, such that $s_u\in(r,R)^\ell$.
\end{itemize}
\end{lemma}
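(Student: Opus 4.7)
The plan is to recognize that $M_u$ is exactly a map of the form treated by Lemma \ref{rm}, with coefficients
\[
a_i = a_{u,i}, \qquad b_i = b_{u,i}, \qquad d_{ij} = d_{u,ij},
\]
and then read off both parts of the conclusion from Lemma \ref{rm}. The nonnegativity $d_{u,ij}\geq 0$ required by Lemma \ref{rm} is immediate from the sign convention $\lambda_{ij}<0$ together with $(u_i^+)^{\alpha_{ij}+1}(u_j^+)^{\beta_{ij}}\geq 0$; the exponents $\alpha_{ij},\beta_{ij}>0$ and the growth condition $\alpha_{ij}+\beta_{ij}<p$ are assumed throughout the paper and are exactly the hypotheses of Lemma \ref{rm}.

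For $(i)$, I note that $a_{u,i}=\|u_i\|^2\neq 0$ forces $a_{u,i}>0$, so the hypothesis of Lemma \ref{rm}$(i)$ (that $b_i=0$ for some $i$ with the other coefficients in the admissible range) is met, and Lemma \ref{rm}$(i)$ gives $M_{u,i}(s)>0$ for every $s\in(0,\infty)^\ell$; in particular $M_u(s)\neq 0$.

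For $(ii)$, the hypothesis $a_{u,i},b_{u,i}>0$ together with $d_{u,ij}\geq 0$ is exactly what Lemma \ref{rm}$(ii)$ needs, and the existence of some $s_u\in(0,\infty)^\ell$ with $M_u(s_u)=0$ follows. Uniqueness of $s_u$ comes from Lemma \ref{rm}$(iii)$. For the uniform localization, under the two-sided bounds $0<a\le a_{u,i}\le\ol a$, $0<b\le b_{u,i}\le \ol b$, $d_{u,ij}\le\ol d$, the second half of Lemma \ref{rm}$(ii)$ produces $0<r<R$ (depending only on $a,\ol a,b,\ol b,\ol d$) such that $s_u\in(r,R)^\ell$, which is the claimed quantitative statement.

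Since everything reduces to Lemma \ref{rm} by inspection, there is essentially no obstacle; the only thing to verify is that the definitions of $a_{u,i},b_{u,i},d_{u,ij}$ expand $\langle I_i(su),u_i\rangle$ into precisely the algebraic form of $M_i$ in Section \ref{sec:rm}, which is a one-line calculation using \eqref{eq:ii}, \eqref{eq:ki}, and the homogeneity of the nonlinear terms in $s$.
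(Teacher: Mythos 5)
Your proposal is correct and follows exactly the paper's route: the paper's proof of Lemma \ref{su} is simply the observation that it is an immediate consequence of Lemma \ref{rm}, with the identification $a_i=a_{u,i}$, $b_i=b_{u,i}$, $d_{ij}=d_{u,ij}$ that you spell out (including the sign check $d_{u,ij}\ge 0$ from $\lambda_{ij}<0$). Your version just makes the verification of the hypotheses explicit, which is fine.
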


\begin{proof}
This is an immediate consequence of Lemma \ref{rm}.
\end{proof}

Let 
\[
\cS := \{v\in H^1_0(\Omega): \|v\| = 1\}, \quad  \cT := \cS^\ell,
\]
and

\begin{align} 
\label{defU}  \cU :&= \{u\in\cT : s_u\in(0,\infty)^\ell \text{ exists with }M_u(s_u)=0\}\\
& \nonumber = \{u\in\cT: u_i^+\ne 0\text{ for all } i=1,\ldots,\ell\}.
\end{align}

The tangent space of $\cT$ at $u$ is 
\begin{equation} \label{tangent}
T_u(\cT):=\{(v_1,\ldots,v_\ell)\in\cH:\langle u_i,v_i\rangle=0\text{ for all }i=1,\ldots,\ell\}.
\end{equation}

\begin{proposition} \label{mainprop}
\begin{itemize}
\item[$(i)$] $\cU$ is a nonempty open subset of $\cT$ and \ $\cU\ne \cT$. 
\item[$(ii)$] The mapping $m: \cU\to\cN$ given by $m(u) := s_uu$ is a homeomorphism. In particular, $\cN$ is a topological manifold.
\item[$(iii)$] If $(u_n)$ is a sequence in $\cU$ such that $u_n\to u\in\partial\cU$, then $s_{u_n}\to\infty$ (and hence $\|m(u_n)\|\to\infty$). 
\item[$(iv)$] Let $S:\cU\to\cH$ be given by
\[
S(u) := I(s_uu) = s_uu-K(s_uu).
\]
Then $S(u)\in T_u(\cU)$ for every $u\in\cU$. 
\item[$(v)$] $S(u) = 0$ if and only if $m(u)=s_uu$ is a solution for \eqref{eq:system}.
\end{itemize}
\end{proposition}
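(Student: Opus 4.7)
The plan is to handle the five parts in the stated order, relying on Lemma \ref{rm} (via Lemma \ref{su}) for (i)--(iv) and the standard negative-part test-function trick for (v). I expect part (iii), the blow-up of $s_{u_n}$ at the boundary of $\cU$, to be the main obstacle: one must convert the qualitative failure $u_i^+\equiv 0$ (for $u\in\partial\cU$) into a quantitative lower bound on $s_{u_n,i}$ that diverges.

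For (i), I would first verify the equivalent description of $\cU$ in \eqref{defU}. If some $u_i^+\equiv 0$ then $b_{u,i}=0$ while $a_{u,i}=\|u_i\|^2=1\ne 0$, so Lemma \ref{su}(i) rules out any $s_u$; conversely, if every $u_i^+\not\equiv 0$ then $a_{u,i},b_{u,i}>0$ and Lemma \ref{su}(ii) yields $s_u$. Openness is immediate from the continuity of $u\mapsto\io(u_i^+)^{p+1}$ on $\cT$ via Sobolev embeddings, and nonemptiness together with $\cU\ne\cT$ is witnessed by strictly positive, respectively nonpositive, choices of the $u_i$. For (ii), I would take the normalization $n(w):=(w_1/\|w_1\|,\dots,w_\ell/\|w_\ell\|)$ as candidate inverse: any $w\in\cN$ has $\|w_i\|>0$ and in fact $w_i^+\not\equiv 0$, since $w_i^+\equiv 0$ would force $K_i(w)=0$ by \eqref{eq:ki} and hence $\langle I_i(w),w_i\rangle=\|w_i\|^2>0$, contradicting $w\in\cN$; thus $n(w)\in\cU$. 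Uniqueness in Lemma \ref{su}(ii) gives $s_{n(w)}=(\|w_1\|,\dots,\|w_\ell\|)$, so $m\circ n=\mathrm{id}$. Continuity of $m$ follows from the continuity of $u\mapsto(a_{u,i},b_{u,i},d_{u,ij})$ and Lemma \ref{rm}(iv); continuity of $n$ uses the uniform positive lower bound on $\|w_i\|$ guaranteed by Lemma \ref{bddaway}.

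For (iii), since $u\in\partial\cU$ lies in $\cT\setminus\cU$ some index $i$ has $u_i^+\equiv 0$; by compact Sobolev embedding $b_{u_n,i}\to 0$. Using $a_{u_n,i}=1$ and $d_{u_n,ij}\ge 0$, the equation $M_{u_n,i}(s_{u_n})=0$ rearranges as
\[
b_{u_n,i}\,s_{u_n,i}^{p} \;=\; s_{u_n,i}+\sum_{j\ne i}d_{u_n,ij}\,s_{u_n,i}^{\alpha_{ij}}s_{u_n,j}^{\beta_{ij}}\;\ge\;s_{u_n,i},
\]
so $s_{u_n,i}^{p-1}\ge 1/b_{u_n,i}\to\infty$, which is the promised quantitative blow-up. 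Part (iv) is then a one-line verification: $\langle S_i(u),u_i\rangle=\langle I_i(s_uu),u_i\rangle=M_{u,i}(s_u)=0$, so $S(u)\in T_u(\cT)$, and since $\cU$ is open in $\cT$ this coincides with $T_u(\cU)$.

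Finally, for (v), $S(u)=0$ is by definition the statement that $s_uu$ is a fully nontrivial solution of \eqref{eq:system2}. Testing the $i$-th equation against $(s_{u,i}u_i)^-$ and using the pointwise identities $(v^+)^{p}v^-\equiv 0$ and $(v^+)^\alpha w^-\equiv 0$ (valid for any $\alpha>0$), the right-hand side vanishes, yielding $\io|\nabla(s_{u,i}u_i)^-|^2=0$; hence $s_{u,i}u_i\ge 0$ and $s_uu$ solves \eqref{eq:system}. Conversely, a solution of \eqref{eq:system} has $u_i=u_i^+$ and therefore also solves \eqref{eq:system2}, giving $S(u)=0$.
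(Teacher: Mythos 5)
Your proposal is correct, and for parts (i), (ii), (iv) and (v) it follows essentially the same route as the paper (normalization map as inverse of $m$, tangency from $M_{u,i}(s_u)=0$, testing with the negative part). The one genuine difference is part (iii): the paper argues softly by contradiction — if $(s_{u_n})$ had a bounded subsequence, then $s_{u_n}u_n$ would converge to a point of the closed set $\cN$ (Lemma \ref{bddaway}), forcing $u\in\cU$ and contradicting openness of $\cU$ — whereas you argue directly and quantitatively: for the index $i$ with $u_i^+=0$ one has $b_{u_n,i}\to 0$, and the relation $M_{u_n,i}(s_{u_n})=0$ with $a_{u_n,i}=1$, $d_{u_n,ij}\ge 0$ gives $s_{u_n,i}^{p-1}\ge 1/b_{u_n,i}\to\infty$. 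Your version buys a little more (it identifies which component of $s_{u_n}$ blows up and gives a rate, without passing to subsequences), while the paper's version recycles Lemma \ref{bddaway} and needs no computation; both are perfectly valid. Two cosmetic remarks: in (ii) you should also note the trivial identity $m^{-1}(m(u))=u$ (or injectivity of $m$) to conclude bijectivity, not only $m\circ m^{-1}=\mathrm{id}$; and in (v) the stated identity ``$(v^+)^\alpha w^-\equiv 0$'' is false as written for unrelated $v,w$ — what you actually use, and what makes the right-hand side vanish, is $(\bar u_i^+)^{\alpha_{ij}}\bar u_i^-\equiv 0$, exactly as in the paper. Neither point is a gap in substance.
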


\begin{proof}
$(i):$ That $\cU$ is neither empty nor the whole $\cT$ is obvious and, since $u\mapsto u_i^+$ is continuous \cite[Lemma 2.3]{ccn}, it is easily seen from the second line of \eqref{defU} that $\cU$ is open in $\cT$.

$(ii):$ If $u\in\cU$, then $s_uu\in\cN$ because $\langle I_i(s_uu),s_{u,i}u_i\rangle = s_{u,i}M_{u,i}(s_u)= 0$ for all $i$. So $m$ is well defined.  If $(u_n)$ is a sequence in $\cU$ and $u_n\to u\in \cU$, then  $a_{u_n,i}\to a_{u,i}$, $b_{u_n,i}\to b_{u,i}$ and $d_{u_n,ij}\to d_{u,ij}$ for all $i,j$. By Lemma \ref{rm}$(iv)$, $s_{u_n}\to s_u$. Hence, $m$ is continuous. 

If $u\in\cN$, then $u_i^+\neq 0$ for all $i$. Otherwise, $0=\langle I_i(u),u_i\rangle=\|u_i\|^2$, a contradiction. Hence, the inverse of $m$ satisfies
\[
m^{-1}(u) := \left(\frac{u_1}{\|u_1\|}, \ldots, \frac{u_\ell }{\|u_\ell\|}\right)\in\cU,
\]
and it is obviously continuous.

$(iii):$ Let $(u_n)$ be a sequence in $\cU$ such that $u_n\to u\in \partial\cU$. If $(s_{u_n})$ is bounded, then, after passing to a subsequence, $s_{u_n}\to s_*$. Since $\cN$ is closed, $s_*u\in\cN$ and hence $u\in\cU$. This is impossible because $\cU$ is open.

$(iv):$ Since $\langle I_i(s_uu),u_i\rangle = M_{u,i}(s_u)= 0$ for all $i$, we have that $S(u)\in T_u(\cT)$ according to \eqref{tangent}.

$(v):$ If $u\in\cU$ satisfies $S(u)=0$, then $\bar u:=s_uu\in\cN$ and $\bar u$ is a weak solution to the system \eqref{eq:system2} (see \eqref{eq:ii} and \eqref{eq:ki}). Multiplying the $i$-th equation in \eqref{eq:system2} by $u_i^-:=\min\{\bar u_i,0\}$ and integrating gives $\io|\nabla u_i^-|^2 = 0$. 
Hence $u_i^- = 0$, i.e., $\bar u_i\geq 0$ for all $i$. As $\bar u\in\cN$, we have that $\bar u_i\neq 0$. This proves that $\bar u$ solves \eqref{eq:system}. The converse is obvious.
\end{proof}

\begin{remark} \label{rem:pos}
\emph{ 
If $\alpha_{ij}\ge 1$ for all $i$ and all $j\ne i$, then, as $\bar u_i$ above satisfies the $i$-th equation in \eqref{eq:system}, we have
\[
-\Delta \bar u_i +c(x)\bar u_i\ge 0 \quad \text{where } c(x) := -\sum\limits_{j\neq i}\lambda_{ij}\bar u_i^{\alpha_{ij}-1}\bar u_j^{\beta_{ij}}.
\]
Since all $u_i$ are continuous in $\overline\Omega$ and $c\ge 0$, it follows from the strong maximum principle (see e.g. \cite[Theorem 3.5]{gt}) that our solution is strictly posi\-tive in $\Omega$ in this case.
}
\end{remark}

\section{Proof of Theorem \ref{mainthm}} \label{sec:main}

In this section the sub- or superscript $t$ will be used in order to emphasize that we are concerned with the system \eqref{eq:system3}. So, e.g., 
\begin{equation} \label{eq:it} I_{t}(u):=u-K_{t}(u),\qquad S_t(u):=I_{t}(s^t_uu)=s^t_uu-K_t(s^t_uu),
\end{equation}
with
\begin{equation*}
\langle K_{t,i}(u),v\rangle := \io\mu_i(u_i^+)^pv + t\sum\limits_{j\neq i}\lambda_{ij}\io(u_i^+)^{\alpha_{ij}}(u_j^+)^{\beta_{ij}}v,
\end{equation*}
and
\[
\cN_t := \{u\in\cH: u_i\ne 0,\  \langle I_{t,i}(u),u_i\rangle = 0 \text{ for all } i=1,\ldots,\ell\}.
\] 
According to this notation, $\cN_1 =\cN$. When $t=1$, we shall sometimes omit the sub- or superscript $t$.

Consider first the system \eqref{eq:system3} with $t=0$. In this case the equations are uncoupled, the set
$$\cN_0=\{u\in\cH:u_i\ne 0,\ \|u_i\|^2=\io\mu_i(u_i^+)^{p+1}\text{ \ for all \ }i=1,\ldots,\ell\}$$
is the product of the usual Nehari manifolds associated to these equations, and the components of $s_u^0=(s_{u,1}^0\ldots,s_{u,\ell}^0)$ are
$$s_{u,i}^0=\left(\io\mu_i(u_i^+)^{p+1}\right)^{-\frac{1}{p-1}}, \qquad  u\in\cU.$$
The function $I_0$ (cf. \eqref{eq:it}) is the gradient vector field of the functional $\cJ:\cH\to\r$ defined by
\[
\cJ(u):= \frac12\sum_{i=1}^\ell\|u_i\|^2 - \frac1{p+1}\sum_{i=1}^\ell\io \mu_i(u_i^+)^{p+1}.
\]
Note that 
\begin{equation*} 
\cJ(u)=(\tfrac{1}{2}-\tfrac{1}{p+1})\|u\|^2\quad\text{if \ }u\in\cN_0.
\end{equation*}
$\cJ$ has a minimizer $\wt u_0=(\wt u_{0,1},\ldots,\wt u_{0,\ell})$ on $\cN_0$ with $\wt u_{0,i}>0$ and $\wt u_0$ is a solution to the system \eqref{eq:system3} with $t=0$. Each component $\wt u_{0,i}$ is a positive least energy solution to the $i$-th equation of this system. Let $\Psi:\cU\to\r$ be given by
\begin{align} \label{eq:psi}
\Psi(u) :&=\cJ(s_u^0u) =\left(\tfrac{1}{2}-\tfrac{1}{p+1}\right)|s_u^0|^2\\
 &=\left(\tfrac{1}{2}-\tfrac{1}{p+1}\right)\sum_{i=1}^\ell\left(\io\mu_i(u_i^+)^{p+1}\right)^{-\frac{2}{p-1}}. \nonumber
\end{align}
By \cite[Proposition 1.12]{wi} one has that $\Psi\in\cC^2(\cU,\r)$. It is easily seen that
\begin{equation} \label{eq:psi'}
\Psi'(u)v=\cJ'(s^0_uu)[s^0_uv]=\langle S_0(u),s^0_uv\rangle\quad\text{for all \ }u\in\cU, \ v\in T_u(\cU),
\end{equation}
and that $u$ is a critical point of $\Psi$ if and only if $u\in\cU$ and $m_0(u)=s^0_uu$ is a critical point of $\cJ$, see \cite[Theorem 3.3]{cs}. Let $u_0:=m_0^{-1}(\wt u_0)$. Then $u_0$ is a minimizer for $\Psi$.

Invoking Lemma \ref{Linfty} we may choose $R>0$ such that all solutions to the systems \eqref{eq:system3} are contained in the open ball $B_R(0)\subset\cH$, where $R$ is independent of $t\in[0,1]$. Then, by Proposition  \ref{mainprop}, 
\begin{equation} \label{cv}
\{u\in\cU:S_t(u)=0\}\subset\cV_t:=m_t^{-1}(B_R(0)\cap \cN_t).
\end{equation}
For $a\le d$ let
$$\Psi^d:=\{u\in\cU:\Psi(u)\leq d\}, \quad \Psi_a^d := \{u\in\cU: a\le \Psi(u)\leq d\}.$$
It follows from Proposition \ref{mainprop}$(iii)$ that the set $\Psi^d$
is closed in $\cT$ for any $d\in\r$. Note that $\lambda_{ij}<0$ implies $s_{u,i}^t\geq s_{u,i}^0$ for every $u\in\cU$, $t\in[0,1]$, $i=1,\ldots\ell$. So if $|s_u^t|<R$, then $|s_u^0|<R$; hence $\cV_t\subset\cV_0$ and, setting $c:=(\frac{1}{2}-\frac{1}{p+1})R^2$, we have that the closure of $\cV_t$ in $\cT$ satisfies
\begin{equation} \label{vt}
\overline{\cV}_t\subset\overline{\cV}_0\subset\Psi^c\qquad\forall t\in[0,1].
\end{equation}

For each $i=1,\ldots,\ell$ and $k\geq 2$ we choose an ascending sequence $(E_{k,i})$ of linear subspaces of $H^1_0(\Omega)$ such that \ $\dim E_{k,i} = k$, $u_{0,i}\in E_{2,i}$ ($u_0$ is the minimizer chosen above) and $\overline{\bigcup_{k\ge1} E_{k,i}} = H^1_0(\Omega)$. We define 
\[
E_k:=E_{k,1}\times\cdots\times E_{k,\ell}\subset\cH
\]
and denote by $P_k$ the orthogonal projector of $\cH$ onto $E_k$.

\begin{lemma} \label{lem:critinV}
Given $d>0$ there exists $k_d\in\n$ such that
$$P_k(S_t(u))\neq 0\qquad\text{for all \ }u\in(\Psi^d\smallsetminus\cV_t)\cap E_k, \ k\geq k_d, \ t\in[0,1].$$
\end{lemma}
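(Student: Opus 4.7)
The plan is to argue by contradiction. Suppose no such $k_d$ exists. Then there are sequences $k_n\to\infty$, $t_n\in[0,1]$ and $u_n\in(\Psi^d\smallsetminus\cV_{t_n})\cap E_{k_n}$ with $P_{k_n}(S_{t_n}(u_n))=0$. Since $u_n\in\cT$, one has $a_{u_n,i}=\|u_{n,i}\|^2=1$, while Sobolev's inequality yields uniform upper bounds for $b_{u_n,i}$ and $d_{u_n,ij}$. Combining the explicit formula \eqref{eq:psi} with the identity $(s_{u_n,i}^0)^2=b_{u_n,i}^{-2/(p-1)}$, the constraint $\Psi(u_n)\le d$ provides a positive lower bound for every $b_{u_n,i}$. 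Lemma \ref{su}$(ii)$, applied with coupling constants $t_n\lambda_{ij}$, then confines $s_{u_n}^{t_n}$ to a fixed cube $[r_0,R_0]^\ell\subset(0,\infty)^\ell$ independent of $n$.

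After extraction I assume $t_n\to t_*\in[0,1]$, $s_{u_n}^{t_n}\to s^*\in[r_0,R_0]^\ell$ and $u_n\rh u_*$ weakly in $\cH$. Since $u_n\in E_{k_n}$ forces $s_{u_n}^{t_n}u_n\in E_{k_n}$, the hypothesis $P_{k_n}(S_{t_n}(u_n))=0$ rewrites as the fixed-point type identity
\[
s_{u_n}^{t_n}u_n=P_{k_n}K_{t_n}(s_{u_n}^{t_n}u_n).
\]
Writing $K_t=K_0+t(K_1-K_0)$ and applying Lemma \ref{lem:K compact} to each summand gives $K_{t_n}(s_{u_n}^{t_n}u_n)\to K_{t_*}(s^*u_*)$ strongly in $\cH$. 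Because $\bigcup_k E_k$ is dense in $\cH$, $P_k\to\mathrm{Id}$ pointwise, and hence $P_{k_n}$ applied to any strongly convergent sequence has the same strong limit. Consequently $s_{u_n}^{t_n}u_n\to s^*u_*$ strongly, and dividing each component by $s_{u_n,i}^{t_n}\to s_i^*>0$ yields $u_n\to u_*$ strongly in $\cH$. In particular $\|u_{*,i}\|=1$ for every $i$, so $u_*\in\cT$.

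Strong convergence also gives $b_{u_*,i}=\lim b_{u_n,i}>0$, so $u_{*,i}^+\not\equiv 0$ and $u_*\in\cU$. Passing to the limit in $\langle I_{t_n,i}(s_{u_n}^{t_n}u_n),u_{n,i}\rangle=0$ and invoking the uniqueness from Lemma \ref{rm}$(iii)$ identifies $s^*=s_{u_*}^{t_*}$. The displayed identity then reads $S_{t_*}(u_*)=0$, so by the analogue of Proposition \ref{mainprop}$(v)$ for the parametrized system, $m_{t_*}(u_*)=s_{u_*}^{t_*}u_*$ is a solution of \eqref{eq:system3} at parameter $t_*$. Lemma \ref{Linfty} and the choice of $R$ then force $\|m_{t_*}(u_*)\|<R$. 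However, $u_n\notin\cV_{t_n}$ means $\|s_{u_n}^{t_n}u_n\|\ge R$, and passing to the limit gives $\|s^*u_*\|=\|m_{t_*}(u_*)\|\ge R$, a contradiction.

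The main obstacle is upgrading the weak convergence $u_n\rh u_*$ to strong convergence; this is exactly where the Nehari-type equation, rewritten as a fixed-point identity involving the compact operators $K_t$ together with the projections $P_{k_n}$, does the work. The other delicate point is keeping $s_{u_n}^{t_n}$ inside a compact subset of $(0,\infty)^\ell$ uniformly in $n$, which is precisely why the sublevel set $\Psi^d$ appears in the statement.
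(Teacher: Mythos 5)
Your proposal is correct and follows essentially the same contradiction argument as the paper: the lower bound on $b_{u_n,i}$ from \eqref{eq:psi}, the uniform confinement of $s_{u_n}^{t_n}$ via Lemma \ref{su}, the fixed-point identity $s_{u_n}^{t_n}u_n=P_{k_n}K_{t_n}(s_{u_n}^{t_n}u_n)$ upgraded to strong convergence through Lemma \ref{lem:K compact} and $P_k\to\mathrm{Id}$, and the final clash between $S_{t_*}(u_*)=0$ (hence $\|s_{u_*}^{t_*}u_*\|<R$ by Lemma \ref{Linfty} and \eqref{cv}) and $\|s_{u_n}^{t_n}u_n\|\ge R$. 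The only differences are cosmetic elaborations (the affine decomposition of $K_t$ in $t$, the explicit treatment of the projections), which the paper leaves implicit.
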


\begin{proof}
Arguing by contradiction, assume that there exist $k_n\to\infty$, $t_n\in[0,1]$ and $u_n\in(\Psi^d\smallsetminus\cV_{t_n})\cap E_{k_n}$ such that
\begin{equation} \label{eq:critinV}
P_{k_n}(S_{t_n}(u_n))=s_{u_n}^{t_n}u_n-P_{k_n}K_{t_n}(s_{u_n}^{t_n}u_n)=0\qquad\forall n\in\n.
\end{equation} 
As $u_n\in\Psi^d$, we derive from \eqref{eq:psi} that $\io\mu_i(u_{n,i}^+)^{p+1}\ge b$ for some $b>0$ and all $n, i$. In the notation of Lemma \ref{su}, we have $a_{u_n,i}=1$ and, using the Hölder and the Sobolev inequalities, $b\le b_{u_n,i}\le \ol b$ and 
\[
d_{u_n,ij} = t_n\io (-\lambda_{ij}) (u_{n,i}^+)^{\alpha_{ij}+1}(u_{n,j}^+)^{\beta_{ij}} \le \ol d
\]
for some $\ol b,\ol d>0$. So Lemma \ref{su} asserts that $(s^{t_n}_{u_n,i})$ is bounded and bounded away from $0$ for each $i$. Therefore, after passing to a subsequence, $s^{t_n}_{u_n,i}\to s_i>0$, $t_n\to t$ and $u_n\rh u$ weakly in $\cH$. By Lemma \ref{lem:K compact}, $K_{t_n}(s_{u_n}^{t_n}u_n) \to K_t(su)$ strongly in $\cH$, and we easily deduce that $P_{k_n}K_{t_n}(s_{u_n}^{t_n}u_n) \to K_t(su)$ strongly in $\cH$. Now we derive from \eqref{eq:critinV} that $s_{u_n}^{t_n}u_n\to su$ strongly in $\cH$ and $su-K_{t}(su)=0$. Therefore, $su\in\cN_t$, $s=s^t_u$ and $S_t(u)=0$. On the other hand, as $u_n\notin\cV_{t_n}$, we have that $\|s^{t_n}_{u_n}u_n\|\geq R$. Hence, $\|s^t_uu\|\geq R$. This is a contradiction.
\end{proof}

\begin{lemma} \label{contraction}
$\Psi^c\cap E_k$ is contractible in itself for each large enough $k$.
\end{lemma}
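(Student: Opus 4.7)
The plan is to construct an explicit contraction of $\Psi^c\cap E_k$ to the point $u_0$ (the minimizer of $\Psi$ introduced above). Recall that $u_{0,i}\in E_{2,i}\subset E_{k,i}$ by construction, $u_{0,i}>0$ in $\Omega$ since $\wt u_{0,i}>0$, and $u_0\in\Psi^c\cap E_k$ because $\Psi(u_0)=\cJ(\wt u_0)<c$ (as $\wt u_0$ solves \eqref{eq:system3} with $t=0$ and hence has norm strictly less than $R$).

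For $u\in\Psi^c\cap E_k$ and $s\in[0,1]$ set $w_i(s):=(1-s)u_i+su_{0,i}\in E_{k,i}$ and define
\[
H(s,u):=\left(\frac{w_1(s)}{\|w_1(s)\|},\ldots,\frac{w_\ell(s)}{\|w_\ell(s)\|}\right).
\]
I would first verify that $H$ is a well-defined continuous map into $\cU\cap E_k$ with $H(0,u)=u$ and $H(1,u)=u_0$: if $w_i(s)=0$ for some $s\in(0,1)$, then $u_i=-\tfrac{s}{1-s}u_{0,i}\le 0$, contradicting $u_i^+\ne 0$; moreover on the set $\{x\in\Omega:u_i(x)>0\}$, which has positive measure, one has $(1-s)u_i+su_{0,i}>0$, so $H(s,u)_i^+\ne 0$ and hence $H(s,u)\in\cU$.

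The crux is to check that $\Psi(H(s,u))\le c$ throughout. By \eqref{eq:psi} this reduces to a uniform positive lower bound on
\[
\io\mu_i(H(s,u)_i^+)^{p+1}=\frac{\io\mu_i(w_i(s)^+)^{p+1}}{\|w_i(s)\|^{p+1}}.
\]
For $s$ bounded away from $1$, the pointwise inequality $w_i(s)^+\ge(1-s)u_i^+$ (valid because $su_{0,i}\ge 0$) combined with $\io\mu_i(u_i^+)^{p+1}\ge b>0$, implied by $\Psi(u)\le c$, gives the bound. For $s$ close to $1$ I would use the strict positivity of $u_{0,i}$ in $\Omega$ together with the equivalence of norms on the finite-dimensional space $E_{k,i}$ (so that $\|u_i\|_\infty\le C_k$ on $\cS\cap E_{k,i}$) to show that $w_i(s)$ is pointwise close to $su_{0,i}$ on a set of large measure, whence the integral is close to $s^{p+1}\io\mu_iu_{0,i}^{p+1}$. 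Compactness of $\Psi^c\cap E_k$ in the ambient product of spheres $(\cS\cap E_{k,1})\times\cdots\times(\cS\cap E_{k,\ell})$ renders these estimates uniform in $u$.

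The main obstacle is the intermediate range of $s$, where neither $(1-s)u_i$ nor $su_{0,i}$ dominates in $w_i(s)$; here a finer pointwise split of $\Omega$ according to the sign of $u_i$ is needed, exploiting once more the strict positivity of $u_{0,i}$ together with the $L^\infty$ bound $C_k$. The hypothesis ``$k$ large enough'' is what lets one absorb the $k$-dependent constants into the uniform lower bound. Once $\Psi(H(s,u))\le c$ is established, $H$ is the desired contraction of $\Psi^c\cap E_k$ in itself to the single point $u_0$.
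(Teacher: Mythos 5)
Your homotopy $H$ is exactly the paper's deformation $\eta$, and the verification that it is well defined, stays in $\cU\cap E_k$, and preserves $E_k$ is fine. The gap is in the crux step: you assert that $\Psi(H(s,u))\le c$ along the whole path, but your own estimates cannot deliver this, and in general it is false. The inequality $w_i(s)^+\ge(1-s)u_i^+$ together with $\|w_i(s)\|\le 1$ only gives $\io\mu_i(H_i(s,u)^+)^{p+1}\ge(1-s)^{p+1}\io\mu_i(u_i^+)^{p+1}$, i.e.\ a lower bound $\delta_0$ that is strictly weaker than the bound characterizing the level $c$; for $u$ with $\Psi(u)=c$ (or close to it) even a small dip of $\io\mu_i(\eta_i(\tau,u)^+)^{p+1}$ pushes the path out of $\Psi^c$, and there is no monotonicity of $\Psi$ along the segment from $u$ to $u_0$. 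Your proposed fix for the intermediate range of $s$ via the $L^\infty$-equivalence constant $C_k$ on $E_{k,i}$ also misreads the role of $k$: those constants blow up as $k\to\infty$ (there is no uniform embedding $H^1_0\hookrightarrow L^\infty$ for $N\ge2$), so ``$k$ large'' makes them worse, not better, and in any case no such estimate can upgrade ``$\Psi$ bounded on the image of $H$'' to ``$\Psi\le c$ on the image of $H$''.

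What is missing is the second half of the paper's argument, which is where the hypothesis ``$k$ large enough'' genuinely enters. The paper only proves that $\eta$ maps $[0,1]\times(\Psi^c\cap E_k)$ into $\Psi^d\cap E_k$ for some fixed $d>c$ (this is precisely the $\delta_0$-bound you obtained), and then deforms $\Psi^d\cap E_k$ back into $\Psi^c\cap E_k$ by the negative gradient flow of $\Psi|_{\,\cU\cap E_k}$. For this one needs that $\Psi|_{\,\cU\cap E_k}$ has no critical values in $[c,d]$, which is supplied by Lemma \ref{lem:critinV} (a critical point in $\Psi^d_c$ would satisfy $P_kS_0(u)=0$ while lying outside $\cV_0$, impossible for $k\ge k_d$), together with Proposition \ref{mainprop}$(iii)$ to ensure the flow does not escape $\cU$. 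The resulting retraction $\varrho:\Psi^d\cap E_k\to\Psi^c\cap E_k$ composed with $\eta$ gives the contraction. Without this flow argument (or a genuine proof that the segment homotopy never raises $\Psi$ above $c$, which you do not have), the proof is incomplete.
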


\begin{proof}
Let $\eta: [0,1]\times \cU\to \cU$ be given by
\[
\eta(\tau,u) := \left(\frac{(1-\tau)u_1+\tau u_{0,1}}{\|(1-\tau)u_1+\tau u_{0,1}\|},\ldots,  \frac{(1-\tau)u_\ell+\tau u_{0,\ell}}{\|(1-\tau)u_\ell+\tau u_{0,\ell}\|}\right),
\]
where $u_0$ is the previously chosen minimizer for $\Psi$ on $\cU$. Note that $\eta$ is well defined and maps into $\cU$ because $u_{0,i}>0$ in $\Omega$ and $u_i^+\ne 0$ for all $i$. Moreover, if $u\in E_k$, then $\eta(\tau,u)\in E_k$ for each $k\geq 2$. So $\eta$ is a deformation of $\cU\cap E_k$ into $u_0$ and, in particular, of $\Psi^c\cap E_k$ into $u_0$ in $\cU\cap E_k$. 

We claim that there exists $\delta_0>0$ such that
\begin{equation*}
\io[((1-\tau)u_i+\tau u_{0,i})^+]^{p+1} \ge \delta_0\quad\text{for all \ }\tau\in[0,1], \ u\in\Psi^c, \ i=1,\ldots,\ell.
\end{equation*}
Otherwise, there would exist $\tau_n\in[0,1]$ and $u_n\in\Psi^c$ such that
\begin{equation} \label{eq:tozero}
(1-\tau_n)\io(u_{n,i}^+)^{p+1}\leq\io[((1-\tau_n)u_{n,i}+\tau_n u_{0,i})^+]^{p+1}\to 0
\end{equation}
(the inequality is satisfied because $u_{0,i}>0$).
From \eqref{eq:psi} we see that there exists $\delta>0$ such that $\io(u_i^+)^{p+1}\ge\delta$ for all $u\in\Psi^c$ and all $i$. Hence, $\tau_n\to 1$. Since $(u_n)$ is bounded in $\cH$, a subsequence of $(u_{n,i})$ converges in $L^{p+1}(\Omega)$. Therefore,
$$\io[((1-\tau_n)u_{n,i}+\tau_n u_{0,i})^+]^{p+1}\to\io u_{0,i}^{p+1}\geq\delta,$$
a contradiction to \eqref{eq:tozero}.

So, for every $\tau\in[0,1], \ u\in\Psi^c, \ i=1,\ldots,\ell,$ we have 
\begin{align*}
\io(\eta_i(\tau,u)^+)^{p+1} &= \io \frac{[((1-\tau)u_i+\tau u_{0,i})^+]^{p+1}}{\|(1-\tau)u_i+\tau u_{0,i}\|^{p+1}} \\
& \ge \io [((1-\tau)u_i+\tau u_{0,i})^+]^{p+1} \ge \delta_0,
\end{align*}
and we deduce from \eqref{eq:psi} that there exists $d>c$ such that
\begin{equation*}
\eta(\tau,u)\in \Psi^d\cap E_k\qquad\text{for all \ }\tau\in[0,1], \ u\in\Psi^c\cap E_k, \ k\geq 2.
\end{equation*}

Next we show that $\Psi|_{\,\cU\cap E_k}$ does not have a critical value in $[c,d]$ for any large enough $k$. 
Indeed, if $u_k\in\Psi_c^d$ is a critical point of $\Psi|_{\,\cU\cap E_k}$, then, according to \eqref{eq:psi'},
\[
\langle S_0(u_k), s^0_{u_k}v\rangle = 0 \quad \text{for all } v\in T_{u_k}(\cU\cap E_k),
\]
i.e., $P_kS_0(u_k)=0$. Since $u_k\in\Psi_c^d\subset \Psi^d\smallsetminus\cV_t$ (see \eqref{vt}), $k<k_d$ according to Lemma \ref{lem:critinV}.

Now Proposition \ref{mainprop}$(iii)$ allows us to use the negative gradient flow of $\Psi|_{\,\cU\cap E_k}$ in the standard way to obtain a retraction $\varrho:\Psi^d\cap E_k\to\Psi^c\cap E_k$; see, e.g., \cite[Theorem I.3.2]{ch}. Then, $\varrho\circ\eta: [0,1]\times (\Psi^c\cap E_k)\to \Psi^c\cap E_k$ is a deformation of $\Psi^c\cap E_k$ into a point.
\end{proof}

The following statement is an immediate consequence of Lemma \ref{contraction} and basic properties of homology (see e.g. \cite[Sections III.4 and III.5]{do}).

\begin{corollary} \label{chi}
Denote the $q$-th singular homology with coefficients in a field $\mathbb{F}$ by $\mathrm{H}_q(\cdot)$. Then $\mathrm{H}_0(\Psi^c\cap E_k) = \mathbb{F}$ and $\mathrm{H}_q(\Psi^c\cap E_k) = 0$ for $q\neq 0$. In particular, the Euler characteristic 
$$\chi(\Psi^c\cap E_k) := \sum_{q\ge 0}(-1)^q\dim_\mathbb{F} \mathrm{H}_q(\Psi^c\cap E_k) = 1$$
for every large enough $k$.
\end{corollary}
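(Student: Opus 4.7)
The plan is essentially to invoke homotopy invariance of singular homology, since Lemma \ref{contraction} already does all the real work. By that lemma, for every sufficiently large $k$ there is a continuous deformation $F:[0,1]\times(\Psi^c\cap E_k)\to \Psi^c\cap E_k$ with $F(0,\cdot)=\mathrm{id}$ and $F(1,\cdot)\equiv u_0$, exhibited as $F=\varrho\circ\eta$ in the proof of Lemma \ref{contraction}. In particular $\Psi^c\cap E_k$ is contractible, so the identity map is homotopic to a constant map in the ordinary sense.

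Next I would appeal to the homotopy invariance of singular homology: homotopic maps induce the same map on $\mathrm{H}_q$. Applying this to $\mathrm{id}\simeq\mathrm{const}_{u_0}$, the identity of $\mathrm{H}_q(\Psi^c\cap E_k)$ agrees with the map factoring through $\mathrm{H}_q(\{u_0\})$. Since $\mathrm{H}_0(\{u_0\})=\mathbb{F}$ and $\mathrm{H}_q(\{u_0\})=0$ for $q\ne 0$, this forces $\mathrm{H}_q(\Psi^c\cap E_k)=0$ for all $q\ne 0$. For $q=0$, path-connectedness (immediate from the same deformation, which joins every point of $\Psi^c\cap E_k$ to $u_0$ by a path inside the set) yields $\mathrm{H}_0(\Psi^c\cap E_k)=\mathbb{F}$. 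Both statements are precisely those in \cite[Sections III.4 and III.5]{do}.

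The Euler characteristic then follows by definition:
\[
\chi(\Psi^c\cap E_k)=\sum_{q\ge 0}(-1)^q\dim_\mathbb{F}\mathrm{H}_q(\Psi^c\cap E_k)=(-1)^0\cdot 1+\sum_{q\ge 1}(-1)^q\cdot 0=1.
\]
Note that only finitely many terms are nonzero, so the sum is unambiguous and the Euler characteristic is well defined. There is no real obstacle here; the entire content of the corollary is the passage from the contractibility supplied by Lemma \ref{contraction} to the homological statement, which is completely standard.
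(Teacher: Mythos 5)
Your argument is exactly the paper's: the corollary is stated there as an immediate consequence of Lemma \ref{contraction} together with homotopy invariance of singular homology (citing \cite[Sections III.4 and III.5]{do}), which is what you spell out. Correct, and essentially the same route.
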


For $u_0$ as above, let 
$$\sigma_i:\cS\smallsetminus\{-u_{0,i}\}\to(\r u_{0,i})^\perp=:F_i$$
be the stereographic projection. The product $\sigma=(\sigma_1,\ldots,\sigma_\ell)$ of the stereographic projections is a diffeomorphism. So its derivative at $u$
\begin{equation*} 
\sigma'(u): T_u(\cU)\to F := F_1 \times\ldots\times F_\ell
\end{equation*}
is an isomorphism for every $u\in\cU$. Note that, as $u_{0,i}\in E_{2,i}$, we have that $\sigma_i((\cS\cap E_k)\smallsetminus\{-u_{0,i}\})\subset F_i\cap E_k$ for all $k\geq 2$.
\medskip

\begin{proof}[Proof of Theorem \ref{mainthm}]
Let $\cO:=\sigma(\cV_0)$ with $\cV_0$ as in \eqref{cv}. As $u_0\in\cV_0$ we have that $0\in\cO$, and as $\overline{\cV}_0\subset\cU$ and $-u_0\notin\cU$, $\cO$ is bounded in $F$. Set $\cO_k:=\cO\cap E_k$ and $F_k:=F\cap E_k$. Then $\cO_k$ is a bounded open neighborhood of $0$ in $F_k$, and $\overline{\cO}_k\subset\sigma(\Psi^c\cap E_k)$ for $c$ as in \eqref{vt}.

Fix $k_c\in\n$ as in Lemma \ref{lem:critinV}. Recall that
\[
S_t(u) =s^t_uu- K_t(s^t_uu)\in T_u(\cU)\qquad\forall u\in\cU
\]
(see Proposition \ref{mainprop}$(iv)$). Define $G_{t,k}:\sigma(\cU\cap E_k)\to F_k$ by
\begin{equation} \label{gtk}
G_{t,k}(w) := (\sigma'(\sigma^{-1}(w))\circ P_k\circ S_t\circ\sigma^{-1})(w).
\end{equation}
Note that
$$G_{t,k}(w)=0\Longleftrightarrow P_k(S_t(\sigma^{-1}(w)))=0.$$
So, if $k\geq k_c$, $w\in\sigma(\cU\cap E_k)$ and $G_{t,k}(w)=0$, Lemma \ref{lem:critinV} asserts that $w\in\cO_k$. In particular, $G_{t,k}(w)\neq 0$ for every $w\in\partial \cO_k$. From the homotopy and the excision properties of the Brouwer degree we get that
\begin{equation} \label{eq:deg1}
\deg(G_{1,k},\cO_k,0)=\deg(G_{0,k},\cO_k,0)=\deg(G_{0,k},\sigma(\Psi^c\cap E_k),0).
\end{equation}
On the other hand, using \eqref{eq:psi'} and \eqref{gtk} we get
\begin{align*}
(\Psi\circ\sigma^{-1})'(w)z&=\Psi'(\sigma^{-1}(w))[(\sigma^{-1})'(w)z]\\
&=s_{\sigma^{-1}(w)}^0\langle P_k(S_0(\sigma^{-1}(w))),\,(\sigma^{-1})'(w)z\rangle \\
&=s_{\sigma^{-1}(w)}^0\langle (\sigma^{-1})'(w)(G_{0,k}(w)),\,(\sigma^{-1})'(w)z\rangle \\
&=\frac{4s_{\sigma^{-1}(w)}^0}{(\|w\|^2+1)^2}\langle G_{0,k}(w),z\rangle\qquad\forall w\in\sigma(\cU\cap E_k), \ z\in F_k.
\end{align*}
The last identity is obtained by a simple calculation, see e.g. \cite[Lemma 3.4]{l}. Since $(\Psi\circ\sigma^{-1})|_{\sigma(\cU\cap E_k)}$ is of class $\cC^2$ (see \eqref{eq:psi}) and $-(\Psi\circ\sigma^{-1})'(w)$ points into $(\Psi\circ\sigma^{-1})^c$ for all $w\in (\Psi\circ\sigma^{-1})^c_c$, from \cite[Theorem II.3.3]{ch} and Corollary \ref{chi} we obtain
\begin{align} \label{chi1}
\deg(G_{0,k},\sigma(\Psi^c\cap E_k),0)&=\deg(((\Psi\circ\sigma^{-1})|_{\sigma(\cU\cap E_k)})',\sigma(\Psi^c\cap E_k),0)\\
& = \chi(\sigma(\Psi^c\cap E_k)) = \chi(\Psi^c\cap E_k) = 1. \nonumber
\end{align} 
Combining \eqref{eq:deg1} and \eqref{chi1} gives
$$\deg(G_{1,k},\cO_k,0)=1.$$
Hence, for each $k\geq k_c$ there exists $w_k\in\cO_k$ such that $G_{k,1}(w_k) = 0$. Then $u_k := \sigma^{-1}(w_k)\in\cV_0\cap E_k\subset \Psi^c\cap E_k$ satisfies $P_k(S(u_k))=0$, i.e.,
\begin{equation} \label{solution}
s_{u_k}u_k = P_kK(s_{u_k}u_k).
\end{equation} 
As in the proof of Lemma \ref{lem:critinV} (with $t_n$ replaced by $1$ and $s_{u_n}^{t_n}u_{n}$ by $s_{u_k}u_k$) one shows that $(s_{u_k,i})$ is bounded and bounded away from $0$ for each $i$. So passing to a subsequence, $s_{u_k}\to s$ and $u_k\rh u$ weakly in $\cH$. Taking limits in \eqref{solution} and using Lemma \ref{lem:K compact}, we obtain that $s_{u_k}u_k\to su$ strongly in $\cH$ and $su=K(su)$. Hence, $su\in\cN$, $s=s_u$ and $S(u) = s_uu-K(s_uu) = 0$. So, according to Proposition \ref{mainprop}$(v)$, $s_uu$ is a solution to \eqref{eq:system}.  
\end{proof}

\section{Synchronized solutions} \label{sec:sync}

A solution $u=(u_1,\ldots,u_\ell)$ to \eqref{eq:system} is called \emph{synchronized} if $u_i=t_iv$ and $u_j=t_jv$ for some $i\ne j$, $v\in H^1_0(\Omega)\smallsetminus\{0\}$ and $t_1,t_2>0$. In this section we consider a system of 2 equations:
\begin{equation} \label{eq:2eq}
\begin{cases}
-\Delta u_1 = \mu_1 u_1^p + \lambda_{12}u_1^{\alpha_{12}}u_2^{\beta_{12}}, \\
-\Delta u_2 = \mu_2 u_2^p + \lambda_{21}u_2^{\alpha_{21}}u_1^{\beta_{21}}, \\
u_1,u_2\ge 0 \text{ in } \Omega, \quad
u_1,u_2\in H^1_0(\Omega)\smallsetminus\{0\}.
\end{cases}
\end{equation}
Recall that according to our assumptions $\alpha_{12}+\beta_{12}<p$ and $\alpha_{21}+\beta_{21}<p$.

\begin{theorem} \label{sync}
The system \eqref{eq:2eq} has a synchronized solution if and only if $\alpha_{12}+\beta_{12}=\alpha_{21}+\beta_{21}=:q$ and
\begin{equation} \label{lambda}
\frac{\lambda_{12}}{\lambda_{21}} = \left(\frac{\mu_1}{\mu_2}\right)^{(\alpha_{21}-\beta_{12}-1)/(p-1)}.
\end{equation}
\end{theorem}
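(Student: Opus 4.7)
The plan is to reduce the existence of a synchronized solution to a single scalar semilinear equation together with a small algebraic system on the scalars $t_1,t_2$. Both directions flow from substituting the ansatz $u_1=t_1v$, $u_2=t_2v$ into \eqref{eq:2eq}.

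\textbf{Necessity.} Substituting the ansatz and dividing the $i$-th equation by $t_i$ I would obtain the two identities
\begin{align*}
-\Delta v &= \mu_1 t_1^{p-1} v^p + \lambda_{12}\,t_1^{\alpha_{12}-1} t_2^{\beta_{12}}\,v^{\alpha_{12}+\beta_{12}},\\
-\Delta v &= \mu_2 t_2^{p-1} v^p + \lambda_{21}\,t_2^{\alpha_{21}-1} t_1^{\beta_{21}}\,v^{\alpha_{21}+\beta_{21}}.
\end{align*}
Subtracting leaves a sum of (at most) three real powers of $v(x)$ that vanishes a.e.\ in $\Omega$. Since $v$ is a nontrivial nonnegative solution of a subcritical equation, elliptic regularity makes $v$ continuous on $\overline\Omega$; together with the Dirichlet condition and the intermediate value theorem, this forces $v$ to attain every value in some interval $[0,a]$ with $a>0$. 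Thus the polynomial identity holds on $[0,a]$, extends by real-analyticity to all of $(0,\infty)$, and linear independence of distinct real powers of $s$ gives $\alpha_{12}+\beta_{12}=\alpha_{21}+\beta_{21}=:q$ (otherwise the coupling coefficients, which are nonzero because $\lambda_{ij}\ne 0$, would have to vanish) together with the coefficient identities $\mu_1 t_1^{p-1}=\mu_2 t_2^{p-1}$ and $\lambda_{12}t_1^{\alpha_{12}-1}t_2^{\beta_{12}}=\lambda_{21}t_2^{\alpha_{21}-1}t_1^{\beta_{21}}$. The first yields $t_2/t_1=(\mu_1/\mu_2)^{1/(p-1)}$; substituting into the second, the remaining power of $t_1$ cancels thanks to $q_1=q_2$, and \eqref{lambda} drops out.

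\textbf{Sufficiency.} Conversely, assuming the two hypotheses, I would fix any $t_1>0$ and set $t_2:=(\mu_1/\mu_2)^{1/(p-1)}t_1$. A short computation (using \eqref{lambda} and $q_1=q_2=q$) shows the two reduced equations above coincide, collapsing to the single scalar problem
\[
-\Delta v = A v^p - C v^q, \qquad v\in H^1_0(\Omega),\ v\ge 0,\ v\not\equiv 0,
\]
with $A:=\mu_1 t_1^{p-1}>0$ and $C:=-\lambda_{12}t_1^{\alpha_{12}-1}t_2^{\beta_{12}}>0$. Any such $v$ produces the synchronized solution $(t_1v,t_2v)$. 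To obtain $v$ I would apply the mountain pass theorem to
\[
J(v) := \tfrac12\|v\|^2 - \tfrac{A}{p+1}\io(v^+)^{p+1} + \tfrac{C}{q+1}\io(v^+)^{q+1}.
\]
Since the $v^{q+1}$ term appears with the favorable (positive) sign, $J$ has mountain pass geometry for every $0<q<p$ allowed by the hypotheses: near $0$ the $\|v\|^2$ term dominates the controlled $\|v\|^{p+1}$ term, while $J(t\varphi)\to-\infty$ for any fixed positive $\varphi$ as $t\to\infty$. Palais--Smale sequences are bounded via the standard combination $(p+1)J(v_n)-\langle J'(v_n),v_n\rangle=\tfrac{p-1}{2}\|v_n\|^2+\tfrac{p-q}{q+1}C\io(v_n^+)^{q+1}$, and subcritical compactness yields strong convergence. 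Testing the resulting nontrivial critical point against $v^-$ gives $v\ge 0$.

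\textbf{Main obstacle.} The delicate point is the passage in the necessity direction from ``one PDE identity'' to ``coefficient-by-coefficient identities'': it requires $v$ to sweep through a genuine interval of values, which I would justify using continuity of $v$ on $\overline\Omega$ and the Dirichlet boundary condition. The sufficiency is essentially a standard semilinear existence result; the one point to verify is that $q\le 1$ is allowed by the hypotheses, but because the sublinear term enters $J$ with the right sign, no special care is needed beyond the usual mountain-pass argument.
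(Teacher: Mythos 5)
Your proposal is correct and follows essentially the same route as the paper: the same substitution and subtraction yielding the coefficient identities for necessity, and for sufficiency the same reduction to the scalar problem $-\Delta v=Av^p-Cv^q$ treated by the mountain pass theorem with the identical Palais--Smale combination and the $v^-$ test for nonnegativity. The only difference is that you spell out the coefficient-matching step (via continuity of $v$, the range argument and linear independence of distinct powers), which the paper leaves implicit.
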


\begin{proof}
Inserting $u_1=t_1v$, $u_2=t_2v$ into \eqref{eq:2eq} we obtain
\begin{equation*} \label{t1t2}
\begin{cases}
-t_1\Delta v = \mu_1 t_1^pv^p + \lambda_{12}t_1^{\alpha_{12}}t_2^{\beta_{12}}v^{\alpha_{12}+\beta_{12}} \\
-t_2\Delta v = \mu_2 t_2^pv^p + \lambda_{21}t_2^{\alpha_{21}}t_1^{\beta_{21}}v^{\alpha_{21}+\beta_{21}}.
\end{cases}
\end{equation*}
 Dividing the first equation by $t_1$, the second one by $t_2$ and subtracting gives
 \[
 (\mu_1t_1^{p-1}-\mu_2t_2^{p-1})v^p + (\lambda_{12}t_1^{\alpha_{12}-1}t_2^{\beta_{12}}v^{\alpha_{12}+\beta_{12}} - \lambda_{21}t_2^{\alpha_{21}-1}t_1^{\beta_{21}}v^{\alpha_{21}+\beta_{21}}) = 0.
 \]
 So $\alpha_{12}+\beta_{12}=\alpha_{21}+\beta_{21}=q$,
 \[
 \mu_1t_1^{p-1}-\mu_2t_2^{p-1}=0 \qquad \text{and} \qquad \lambda_{12}t_1^{\alpha_{12}-1}t_2^{\beta_{12}} = \lambda_{21}t_2^{\alpha_{21}-1}t_1^{\beta_{21}}.
 \]
 Inserting the solution
 \[
 t_2 = \left(\frac{\mu_1}{\mu_2}\right)^{1/(p-1)}t_1
 \]
 of the first equation into the second one gives \eqref{lambda}.
 
 We have shown that the conditions in Theorem \ref{sync} are necessary. It remains to show  that they are also sufficient. To this aim observe that, if $\alpha_{12}+\beta_{12}=\alpha_{21}+\beta_{21}=:q$, \ \eqref{lambda} holds true,  and  $w$ satisfies
\begin{equation} \label{eq:sync}
-\Delta w = \mu_1 w^p - aw^q,\qquad w\ge 0,\ w\in H_0^1(\Omega)\smallsetminus\{0\},
\end{equation}
with 
 \[
 a := -\lambda_{12}\left(\frac{\mu_1}{\mu_2}\right)^{\beta_{12}/(p-1)},
 \]
 then $\left(w,\left(\frac{\mu_1}{\mu_2}\right)^{1/(p-1)}w\right)$ solves the system \eqref{eq:2eq}.
 Consider the functional
 \[
 \Phi(w) := \frac12\io|\nabla w|^2 +\frac a{q+1}\io(w^+)^{q+1} - \frac{\mu_1}{p+1}\io(w^+)^{p+1}. 
 \]
By standard arguments (see e.g. \cite{str} or \cite{wi}),  $\Phi$ is of class $\cC^1$ and critical points of $\Phi$ are  solutions to the equation
 \begin{equation} \label{eq:w}
 -\Delta w +a(w^+)^q = \mu_1 (w^+)^p.
 \end{equation}
 We shall complete the proof by showing that $\Phi$ has a nontrivial critical point $w\ge 0$. We use the mountain pass theorem (see e.g. \cite{str} or \cite{wi}). By easy calculations (as e.g. in \cite[Proof of Theorem 1.19]{wi}), $\Phi$ has the mountain pass geometry. Here it is important that $p>2$ and $p>q$.  Next we show that $\Phi$ satisfies the Palais-Smale condition. Let $(w_n)$ be such that $\Phi(w_n)\to c$ and $\Phi'(w_n)\to 0$. Then
 \begin{align*}
 c+1+\|w_n\| & \ge \Phi(w_n)-\frac1{p+1}\Phi'(w_n)w_n \\
 & = \left(\frac12-\frac1{p+1}\right)\io|\nabla w_n|^2 +a\left(\frac1{q+1}-\frac1{p+1}\right)\io(w_n^+)^{q+1}
 \end{align*}
for all $n$ large enough. Hence $(w_n)$ is bounded, so passing to a subsequence, $w_n\to w$ weakly in $H^1_0(\Omega)$, and strongly in $L^p(\Omega)$ and $L^q(\Omega)$.  It follows by a standard argument (see e.g. \cite[Proof of Lemma 1.20]{wi}) that $w_n\to w$ strongly also in $H^1_0(\Omega)$. Finally, multiplying \eqref{eq:w} by $w^-$ gives $\io|\nabla w^-|^2 = 0$, so $w^-=0$. The proof is complete.
\end{proof}

\begin{remark}
\emph{
It is easy to show that if $q=p$, then there are no synchronized solutions for $-\lambda_{ij}$ sufficiently large, as is well known in the variational case, see e.g. \cite[Proposition 3.2]{cs}.}
\end{remark}

\begin{remark}
\emph{
Let $\lambda_{ij,n}<0$, $i\ne j$, and $u_n=(u_{n,1},\ldots,u_{n,\ell})$ be a solution to \eqref{eq:system} with $\lambda_{ij}$ replaced by $\lambda_{ij,n}$. It is easy to see that, if the sequence $(u_n)$ is bounded in $\cH$, the components $u_{n,i}$ \emph{separate spatially} as $\lambda_{ij,n}\to-\infty$. More precisely, after passing to a subsequence, $u_{n,i}\to u_i\neq 0$ weakly in $H_0^1(\Omega)$ and strongly in $L^p(\Omega)$ for each $i$, and $u_{i}(x)\cdot u_{ j}(x)=0$ a.e. in $\Omega$ for $i\neq j$. There is an extensive literature on spatial separation of solutions and limiting profiles, under the assumption that the sequence $(u_n)$ is bounded and under different assumptions on the nonlinearities. See e.g. \cite{ctv, cd , sttz} and the references therein.
}
\end{remark}

Obviously, synchronized solutions to \eqref{eq:system} do not separate spatially. So we cannot expect the sequence $(w_n)$ given by \eqref{eq:sync} to be bounded. Indeed, we have the following

\begin{proposition} \label{unbdd}
Let $(w_n)$ be a sequence of solutions to \eqref{eq:sync} with $a=a_n$. If $a_n\to\infty$, then $(w_n)$ is unbounded in $H_0^1(\Omega)$. 
\end{proposition}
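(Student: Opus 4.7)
The plan is to argue by contradiction. Assume $(w_n)$ is bounded in $H_0^1(\Omega)$. Since $a_n\to\infty$, I may assume $a_n>0$ for all $n$. Testing \eqref{eq:w} (equivalently \eqref{eq:sync}) against $w_n$ and using $w_n\ge 0$ gives the Nehari-type identity
$$
\|w_n\|^2 + a_n\int_\Omega w_n^{q+1} = \mu_1\int_\Omega w_n^{p+1}.
$$
Dropping the nonnegative middle term and invoking the Sobolev embedding $H_0^1(\Omega)\hookrightarrow L^{p+1}(\Omega)$ yields $\|w_n\|^2 \le \mu_1\int_\Omega w_n^{p+1} \le C\|w_n\|^{p+1}$, and since $w_n\not\equiv 0$ and $p>1$ this produces a uniform lower bound $\|w_n\|\ge c_0>0$, and in particular $\int_\Omega w_n^{p+1}\ge c_0^2/\mu_1$.

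Next I rearrange the identity as
$$
a_n\int_\Omega w_n^{q+1} = \mu_1\int_\Omega w_n^{p+1} - \|w_n\|^2.
$$
Because $(w_n)$ is bounded in $H_0^1$ and $p+1<2^*$, the right-hand side is bounded, so dividing by $a_n\to\infty$ forces
$$
\int_\Omega w_n^{q+1}\longrightarrow 0.
$$

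Finally I apply the Rellich--Kondrachov compactness theorem: up to a subsequence, $w_n\rightharpoonup w$ in $H_0^1(\Omega)$ and $w_n\to w$ strongly in $L^{p+1}(\Omega)$ and $L^{q+1}(\Omega)$. The previous step gives $\int_\Omega w^{q+1}=0$, hence $w\equiv 0$, so $\int_\Omega w_n^{p+1}\to 0$. This contradicts the strictly positive lower bound $\int_\Omega w_n^{p+1}\ge c_0^2/\mu_1$ obtained in the first step, completing the proof.

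The argument is essentially the Nehari identity combined with compact embedding; I do not foresee any serious obstacle, since the subcriticality $\max\{p+1,q+1\}<2^*$ (guaranteed by the standing hypothesis $1<p<(N+2)/(N-2)$, $q<p$) provides exactly the compactness needed.
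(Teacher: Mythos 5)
Your argument is correct and coincides with the paper's own proof: both assume boundedness, use the identity obtained by testing \eqref{eq:sync} with $w_n$ together with $a_n\to\infty$ to conclude $\int_\Omega w_n^{q+1}\to 0$, pass to the limit via the compact Sobolev embeddings to force $w\equiv 0$, and contradict the uniform lower bound coming from $\|w_n\|^2\le \mu_1\int_\Omega w_n^{p+1}\le C\|w_n\|^{p+1}$. The only cosmetic difference is that you derive the contradiction from $\int_\Omega w_n^{p+1}$ being bounded away from zero, whereas the paper phrases it in terms of $\|w_n\|$ being bounded away from zero.
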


\begin{proof}
Suppose $(w_n)$ is bounded. Then, passing to a subsequence, $w_n\to w$ weakly in $H^1_0(\Omega)$, strongly in $L^p(\Omega)$ and in $L^q(\Omega)$. Since
\[
\io|\nabla w_n|^2+a_n\io w_n^q = \mu_1\io w_n^p,
\]
we have that $w_n\to 0$ in $L^q(\Omega)$. So $w=0$ and therefore $w_n\to 0$ strongly in $H^1_0(\Omega)$. This is a contradiction because by the Sobolev inequality,
\[
\io|\nabla w_n|^2 \le \mu_1\io w_n^p \le C\left(\io|\nabla w_n|^2\right)^{p/2}
\]
for some constant $C$, so $\|w_n\|$ is bounded away from $0$.
\end{proof}

It is well known that, when the system \eqref{eq:system} is variational, least energy solutions are bounded in $\cH$, independently of $\lambda_{ij}$. We close this section with the following open question.

\begin{problem} \label{pb}
Given $\lambda_{ij,n}\to-\infty$ for $i\ne j$, does the  system \eqref{eq:system} with $\lambda_{ij}$ replaced by $\lambda_{ij,n}$ have a solution $u_n$ such that the sequence $(u_{n,i})$ is bounded in $H_0^1(\Omega)$ for all $i$?
\end{problem}

\appendix
\section{Appendix} \label{sec:appendix}

In this appendix we prove Lemma \ref{Linfty}. We employ some arguments which may be found in \cite{dFY, gs}. First we note that by standard regularity results the solutions $u_i$ of \eqref{eq:system3} are in $\cC^2(\Omega)\cap \cC(\overline\Omega)$. 

Suppose there exists a sequence of solutions $(u_n)$ with $|u_n|_\infty\to\infty$. Passing to a subsequence, we may assume $|u_{n,i}|_\infty\to\infty$ and  $|u_{n,i}|_\infty\ge|u_{n,j}|_\infty$ for some $i$ and all $j$. There exists $x_n\in\Omega$ such that 
\[
\max_{x\in\Omega} u_{n,i}(x) = u_{n,i}(x_n).
\]
Let $\beta := \frac2{p-1}$ and choose $\vr_n$ so that
\[
\vr_n^\beta\,|u_{n,i}|_\infty = 1.
\]
Then $\vr_n\to 0$ and passing to a subsequence, $x_n\to x_0\in\overline{\Omega}$. Let 
\[
\Omega_n := \{y\in\rn: \vr_ny+x_n\in\Omega\}
\]
and
\begin{equation} \label{defvni}
v_{n,j}(y) := \vr_n^\beta u_{n,j}(\vr_ny+x_n), \quad j=1,\ldots, \ell.
\end{equation}
Then,
\begin{equation} \label{vni}
0\le v_{n,i}\le1, \quad v_{n,i}(0)=1 \quad \text{and } v_{n,j}|_{\partial\Omega_n} = 0  \text{ for all } j.
\end{equation}
Passing to a subsequence, there are two possible cases and we shall complete the proof by ruling out both of them. Denote the distance from $x$ to a set $A$ by $d(x,A)$.

\medskip

\noindent\emph{Case 1.} $\frac{d(x_n,\partial\Omega)}{\vr_n}\to\infty$. \\
Since $\vr_ny+x_n\in\Omega$ if $|y| < \frac{d(x_n,\partial\Omega)}{\vr_n}$, for each $R>0$ there exists $n_0$ such that $B_R(0)\subset \Omega_n$ whenever $n\ge n_0$. For $y\in B_R(0)$ and $n\ge n_0$ we have
\begin{align*}
-\Delta_y v_{n,i} & = \vr_n^{\beta+2}\Delta_x u_{n,i} = \vr_n^{\beta+2}\Big(\mu_iu_{n,i}^p+\sum\limits_{j\ne i}\lambda_{ij}u_{n,i}^{\alpha_{ij}}u_{n,j}^{\beta_{ij}}\Big) \\
& = \vr_n^{\beta+2-\beta p}\mu_i v_{n,i}^p + \sum\limits_{j\ne i} \vr_n^{\beta+2-\beta(\alpha_{ij}+\beta_{ij})}\lambda_{ij}v_{n,i}^{\alpha_{ij}}v_{n,j}^{\beta_{ij}}.
\end{align*}
Since $\beta+2-\beta p = 0$ and $\gamma_{ij} := \beta+2-\beta(\alpha_{ij}+\beta_{ij}) > 0$, we can re-write this identity as
\[
-\Delta v_{n,i} = \mu_i v_{n,i}^p + \sum\limits_{j\ne i} \vr_n^{\gamma_{ij}}\lambda_{ij}v_{n,i}^{\alpha_{ij}}v_{n,j}^{\beta_{ij}}.
\]
By elliptic estimates, $(v_{n,i})$ is bounded in $W^{2,q}(B_R(0))$ for some $q>N$. So passing to a subsequence, $v_{n,i} \to v_i$ weakly in $W^{2,q}(B_R(0))$ and strongly in $\cC^1(B_R(0))$. Since $\vr_n^{\gamma_{ij}}\to 0$, $v_i$ is a nonnegative solution to the equation  
\[
-\Delta v = \mu_iv^p
\]
in $B_R(0)$. Let now $R_m\to\infty$. Then for each $m$ we get a solution $v_{i,m}$ of the above equation in $B_{R_m}(0)$. Passing to subsequences and applying the diagonal procedure, we see that $v_{i,m}\to w$, weakly in $W^{2,q}_{loc}(\rn)$ and strongly in $\cC^1_{loc}(\rn)$. So $-\Delta w = \mu_iw^p$ in $\rn$, $w\ge 0$, $w(0)=1$ according to \eqref{vni}, and $w\in \cC^2(\rn)$ by Schauder estimates. Replacing $w$ with $cw$ for a suitable $c>0$ we may assume $\mu_i=1$. Hence it follows from \cite[Theorem 1.2]{gs} that $w=0$ which rules out Case 1.

\medskip

\noindent\emph{Case 2.} $\frac{d(x_n,\partial\Omega)}{\vr_n}\to d\in[0,\infty)$. \\
It is clear that $x_0\in\partial\Omega$ and we may assume without loss of generality that $x_0=0$ and $\nu = (0,\ldots,0,1)$ is the unit outer normal to $\partial\Omega$ at $x_0$. Let
\[
\hn := \{y\in\rn: y_N<d\} \quad \text{where } y=(y_1,\ldots,y_N).
\]
We shall need the following result.

\begin{lemma} \label{lem}
\begin{itemize}
\item[$(i)$]Let $A\subset \hn$ be compact. Then there exists $n_0$ such that $\vr_ny+x_n\in\Omega$ for all $n\ge n_0$ and $y\in A$.
\item[$(ii)$]Let $A\subset \rn\smallsetminus\ol\hn$ be compact. Then there exists $n_0$ such that $\vr_ny+x_n\notin\Omega$ for all $n\ge n_0$ and $y\in A$.
\end{itemize}
\end{lemma}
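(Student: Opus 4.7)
The plan is to exploit the smoothness of $\partial\Omega$ via a local graph representation near $x_0 = 0$. Since the unit outer normal at $x_0$ is $e_N = (0,\ldots,0,1)$, there exist an open ball $U = B_{r_0}(0) \subset \rn$ and a smooth function $\vp$ defined on the $(N-1)$-dimensional projection $U'$ of $U$ such that $\vp(0) = 0$, $\nabla\vp(0) = 0$, and
\[
\Omega \cap U = \{(x',x_N)\in U : x_N < \vp(x')\},\qquad \partial\Omega \cap U = \{(x',\vp(x')) : x'\in U'\}.
\]
Write $x_n = (x_n',x_{n,N})$ and set $t_n := \vp(x_n') - x_{n,N} > 0$. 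Since $\nabla\vp$ is continuous with $\nabla\vp(0)=0$ and $x_n \to 0$, the tangent planes of $\partial\Omega$ at the points $(\zeta',\vp(\zeta'))$ with $\zeta'$ near $x_n'$ are nearly horizontal. A short computation (or a direct application of the smoothness of the signed distance function to $\partial\Omega$) then gives
\[
\frac{d(x_n,\partial\Omega)}{t_n} \to 1,\qquad\text{hence}\qquad \frac{t_n}{\vr_n}\to d.
\]

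Fix now a compact set $A\subset\rn$. Since $\vr_n\to 0$ and $x_n\to 0$, for $n$ large enough we have $\vr_n y + x_n \in U$ for every $y\in A$, so
\[
\vr_n y + x_n \in \Omega \iff \vr_n y_N + x_{n,N} < \vp(\vr_n y' + x_n').
\]
The mean value theorem yields $\xi_n$ on the segment from $x_n'$ to $\vr_n y' + x_n'$ with
\[
\vp(\vr_n y' + x_n') - \vp(x_n') = \vr_n\,\nabla\vp(\xi_n)\cdot y',
\]
and since $\xi_n \to 0$ uniformly in $y\in A$, we have $\nabla\vp(\xi_n)\to 0$. Therefore
\[
\frac{\vp(\vr_n y' + x_n') - (\vr_n y_N + x_{n,N})}{\vr_n} = \frac{t_n}{\vr_n} - y_N + o(1) \longrightarrow d - y_N,
\]
where $o(1)\to 0$ as $n\to\infty$, uniformly in $y\in A$.

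The two conclusions follow at once. For $(i)$, if $A\subset\hn$ is compact then $y_N < d$ on $A$, and by compactness there is $\delta>0$ with $y_N \le d-\delta$ for all $y\in A$; the displayed quantity is then $\ge \delta/2$ for large $n$, so $\vr_n y + x_n\in \Omega$. For $(ii)$, if $A\subset \rn\setminus\ol{\hn}$ is compact then $y_N\ge d+\delta$ for some $\delta>0$ and all $y\in A$; the displayed quantity is then $\le -\delta/2$ for large $n$, so $\vr_n y + x_n\notin\Omega$. The only point requiring some care is the claim $t_n/\vr_n\to d$: the hypothesis of Case 2 gives $d(x_n,\partial\Omega)/\vr_n\to d$, not directly $t_n/\vr_n\to d$, so one must use the smoothness of $\partial\Omega$ (specifically $\nabla\vp\to 0$ near $0$) to identify these two quantities to leading order. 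Once that identification is in hand, the rest is a direct Taylor expansion.
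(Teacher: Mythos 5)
Your argument is correct, and it follows a genuinely different implementation than the paper's, although both rest on the same geometric fact, namely that $\partial\Omega$ is tangent to the hyperplane $x_N=0$ at $x_0=0$. The paper works with the nearest boundary point $\wh x_n$ to $x_n$: since $x_n-\wh x_n$ is parallel to the normal at $\wh x_n$, which converges to $(0,\ldots,0,1)$, one has $|x_n-\wh x_n|/\vr_n=(\wh x_{n,N}-x_{n,N})/\vr_n+o(1)$, and the conclusion is then obtained by translating the rescaled points by $x_n-\wh x_n$ and invoking tangency once more. You instead represent $\Omega$ near $0$ as the subgraph of $\vp$ with $\vp(0)=0$, $\nabla\vp(0)=0$, compare $d(x_n,\partial\Omega)$ with the vertical gap $t_n=\vp(x_n')-x_{n,N}$, and establish the single uniform expansion $\bigl(\vp(\vr_ny'+x_n')-(\vr_ny_N+x_{n,N})\bigr)/\vr_n\to d-y_N$ on compact sets, from which $(i)$ and $(ii)$ follow symmetrically according to the sign of $d-y_N$; this is more explicit than the paper's translation step and treats the two cases in one stroke, at the cost of having to justify $d(x_n,\partial\Omega)/t_n\to1$. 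You correctly single this out as the delicate point, and it does hold: $d(x_n,\partial\Omega)\le t_n$ is immediate because $(x_n',\vp(x_n'))\in\partial\Omega$, while for the reverse direction, writing $\wh x_n=(\wh x_n',\vp(\wh x_n'))$ for the nearest boundary point and $L_n$ for the supremum of $|\nabla\vp|$ on the segment joining $x_n'$ and $\wh x_n'$ (so $L_n\to0$ since both endpoints tend to $0$), one gets $\vp(\wh x_n')-x_{n,N}\ge t_n-L_n|\wh x_n'-x_n'|$, and minimizing $h^2+(t_n-L_nh)^2$ over $h\ge0$ yields $d(x_n,\partial\Omega)\ge t_n/\sqrt{1+L_n^2}$. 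It is worth noting that only part $(ii)$ needs this reverse inequality: for $(i)$ the trivial bound $t_n\ge d(x_n,\partial\Omega)$ already gives $\liminf_n t_n/\vr_n\ge d$, which is all your argument uses there. With this two-line estimate written out (or a reference to the smoothness of the signed distance function), your proof is complete.
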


\begin{proof}
$(i):$ Since $A$ is compact, there exists $\eps>0$ such that $y_N<d-2\eps$ for all $y\in A$. For each $n$ there exists $\wh x_n\in\partial\Omega$ which is closest to $x_n$, i.e., $d(x_n,\partial\Omega) = |x_n-\wh x_n|$. As $\partial\Omega$ is tangent to the hyperplane $x_N=0$ at $0$,
\[
\frac{|x_n-\wh x_n|}{\vr_n} = \frac{\wh x_{n,N}-x_{n,N}}{\vr_n} + o(1).
\]
Therefore,
\[
\frac{x_{n,N}-\wh x_{n,N}}{\vr_n} < -d+\eps \qquad \text{and} \qquad y_{N} + \frac{x_{n,N}-\wh x_{n,N}}{\vr_n} < -\eps
\] 
for all $y\in A$ if $n$ is large enough. There exists $C>0$ such that 
\[
\left|y+\frac{x_n-\wh x_n}{\vr_n}\right| \le C \quad \text{for all } y\in A.
\]
Using this, we see that there is $n_0$ such that, if $n\ge n_0$ and $y\in A$, then $\vr_ny+x_n-\wh x_n\in\Omega$ and, as $\wh x_n\in\partial\Omega$ and $\partial\Omega$ is tangent to the hyperplane $x_N=0$ at $0$, \ $\vr_ny+x_n = \vr_ny+(x_n-\wh x_n) + \wh x_n\in \Omega$. 

$(ii):$ This time $y_N>d+2\eps$ for $y\in A$, 
\[
\frac{x_{n,N}-\wh x_{n,N}}{\vr_n} > -d-\eps \quad \text{and} \quad y_{n,N} + \frac{x_{n,N}-\wh x_{n,N}}{\vr_n} > \eps
\]
if $n$ is sufficiently large, and the conclusion follows by a similar argument as above. 
\end{proof}

Now we can continue with Case 2. Let $\omega_R := B_R(0)\cap\{y\in\rn: y_N<d-1/R\}$. Then $\overline{\omega}_R\subset \Omega_n$ for all $n\ge n_0$ by Lemma \ref{lem}$(i)$. Let $v_{n,i}$ be given by \eqref{defvni} and using \eqref{vni} extend it by 0 outside $\Omega_n$.  According to Lemma \ref{lem}$(ii)$, if $A\subset \rn\smallsetminus\ol\hn$ is compact, then $\vr_ny+x_n\notin\Omega$ for all $y\in A$ and $n$ large enough. So 
\begin{equation} \label{lim}
\lim_{n\to\infty}v_{n,i}(x)=0 \quad \text{for all } x\notin \hn.
\end{equation} 
We can repeat the argument of Case 1 which now gives a nonegative solution to the equation $-\Delta w = \mu_iw^p$ in $\hn$ such that $w(0)=1$. By \eqref{lim}, $w=0$ on $\partial\Omega$. As before, $w\in \cC^2(\hn)$, and since the extended functions $v_{n,i}$ are continuous in $\rn$, $w\in \cC^0(\overline{\hn})$. So $w=0$ according to \cite[Theorem 1.3]{gs}, a contradiction. Hence also Case 2 is ruled out.

\bigskip

\begin{flushleft}
\textbf{Mónica Clapp}\\
Instituto de Matemáticas\\
Universidad Nacional Autónoma de México\\
Circuito Exterior, Ciudad Universitaria\\
04510 Coyoacán, Ciudad de México, Mexico\\
\texttt{monica.clapp@im.unam.mx} 
\medskip

\textbf{Andrzej Szulkin}\\
Department of Mathematics\\
Stockholm University\\
106 91 Stockholm, Sweden\\
\texttt{andrzejs@math.su.se} 
\end{flushleft}

\end{document}